\documentclass{article}
\usepackage{amsmath,amsthm,amssymb,bbm}

\newcommand{\assign}{:=}
\newcommand{\tmop}[1]{\ensuremath{\operatorname{#1}}}
\newcommand{\tmstrong}[1]{\textbf{#1}}

\newtheorem{theorem}{Theorem}
\newtheorem{lemma}{Lemma}

\newcommand{\XXint}[3]{{\setbox}0=\text{\ensuremath{#1 #2 #3 \int}}
{\vcenter{\text{\ensuremath{#2 #3}}}}{\kern}-.5{\tmwd}0}

\newcommand{\opn}[2]{\newcommand{\1}{\}} {\opn}{\Rm{Rm}} {\opn}{\Ric{Ric}}
{\opn}{\Rc{Rc}} {\opn}{\Scal{Sc}} {\opn}{\Tr{Tr}} {\opn}{\Trac{Tr}}
{\opn}detdet {\opn}{\diam{diam}} {\opn}{\dist{dist}} {\opn}{\Im}Im
{\opn}{\div}div {\opn}{\Ker{Ker}} {\opn}expexp {\opn}{\Vol{Vol}}
{\opn}{\exph{exph}} {\opn}{\Herm{Herm}} {\opn}{\End{End}} {\opn}{\Hess{Hess}}
{\opn}{\Vol{Vol}}}

\newcommand{\R}{\mathbb{R}}
\newcommand{\C}{\mathbb{C}}
\newcommand{\K}{\mathbb{K}}
\newcommand{\I}{\mathbb{I}}

\newcommand{\contract}{{\kern}-1.5pt{\vrule} width6.0pt height0.4pt depth0pt
{\vrule} width0.4pt height4.0pt depth0pt}
\newcommand{\retract}{{\kern}-1.5pt{\vrule} width0.4pt height4.0pt depth0pt
{\vrule} width6.0pt height0.4pt depth0pt}
\newcommand{\Openbox}{{\leavevmode} {\text{{\hfil}{\vrule}
width{\boxrulethickness} {\vbox} to{\Openboxwidth{{\advance}{\Openboxwidth}
-2{\boxrulethickness} {\hrule} height {\boxrulethickness}
width{\Openboxwidth}{\vfil} {\hrule} height{\boxrulethickness}}}{\vrule}
width{\boxrulethickness}{\hfil} }}}

\begin{document}

\title{ A second variation formula for Perelman's $\mathcal{W}$-functional along the modified K\"ahler-Ricci flow
}
\author{\\
{\tmstrong{NEFTON PALI}}}\maketitle

\begin{abstract}
  We show a quite simple second variation formula for Perelman's
  $\mathcal{W}$-functional along the modified K\"ahler-Ricci flow over Fano
  manifolds.
\end{abstract}

\section{Introduction}

In a celebrated preprint Grigory Perelman \cite{Per} introduced an entropy type functional
denoted by $\mathcal{W}$ which gradient flow coincides with Perelman's modified
Ricci flow. In the Fano case this flow coincides with Perelman's modified
K\"ahler-Ricci flow. The total second variation of this functional plays an
important role in the study of the stability and convergence of the
K\"ahler-Ricci flow over Fano manifolds. Total second variation formulas at a 
shrinking Ricci soliton point were obtained independently by 
Cao-Hamilton-Ilmanenen \cite{C-H-I}, Cao-Zhu \cite{Ca-Zhu} and Tian-Zhu \cite{Ti-Zhu}. 
The work of Cao-Zhu \cite{Ca-Zhu} is based on the previous work of Cao-Hamilton-Ilmanenen \cite{C-H-I}.
Important applications to the stability and the convergence 
of the K\"ahler-Ricci flow over Fano manifolds were 
given by Tian-Zhu \cite{Ti-Zhu} and by Tian-Zhang-Zhang-Zhu \cite{T-Z-Z-Z}.
Total second variation formulas at arbitrary points in the space of metrics can be found in \cite{Pal}. 
The formulas in \cite{Pal} are of different nature from the ones obtained by \cite{C-H-I}, \cite{Ca-Zhu} and \cite{Ti-Zhu}.

In this paper we show a very simple second
variation formula for Perelman's $\mathcal{W}$-functional along the modified
K\"ahler-Ricci flow over Fano manifolds. 

Our computation concern the
K\"ahler-Ricci flow but we feel that the second variation formula is more
meaningful for the modified K\"ahler-Ricci flow. In any case one can switch
easily from one formula relative to a flow to the other thanks to the invariance by
diffeomorphisms of the functional $\mathcal{W}$. It is some how surprising that we could not
prove a simple second variation formula by working directly with the modified
K\"ahler-Ricci flow. 

The key computation which allows us to obtain our second variation formula
is based on a very surprising integration by parts formula along the K\"ahler-Ricci flow.
It turns out that the corresponding version of this formula along the modified K\"ahler-Ricci 
flow can be obtained using Perelman's version of the twice contracted Bianchi identity. 
The details about this last fact are explained in section \ref{modKRF-Magic-Heat}.

We expect convexity of the functional $\mathcal{W}$ along the K\"ahler-Ricci flow.
Our formula should be considered as a first step in this direction.
It should be clear for the experts that convexity along the flow implies quite natural and strong convergence results.
We explain in detail below the set-up and our
result.

In this paper we will adopt the sign convention $\Delta \assign \tmop{div}
\nabla$ in order to not generate confusion in some standard formulas along the
K\"ahler-Ricci flow.

Let $(X, g)$ be a compact oriented (for simplicity) Riemannian manifold of
dimension $2 n$ and $f$ a smooth real valued function over $X$. We remind that
Perelman's $\mathcal{W}$-functional \cite{Per} is defined (up to a constant) by the formula
\begin{eqnarray*}
  \mathcal{W} (g, f) & : = & \int_X \Big( | \nabla_g f|^2_g \hspace{0.75em} +
  \hspace{0.75em} \tmop{Scal}_g \hspace{0.75em} + \hspace{0.75em} 2 \,f
  \hspace{0.75em} - \hspace{0.75em} 2 \,n \Big) e^{- f} dV_g .
\end{eqnarray*}
It rewrites also as
\begin{eqnarray*}
  \mathcal{W} (g, f) & = & \int_X \Big( \Delta_g \,f \hspace{0.75em} +
  \hspace{0.75em} \tmop{Scal}_g \hspace{0.75em} + \hspace{0.75em} 2 \,f
  \hspace{0.75em} - \hspace{0.75em} 2 \,n \Big) e^{- f} dV_g\\
  &  & \\
  & = & \int_X 2\; H (g, f) \hspace{0.25em} e^{- f} dV_g \hspace{0.25em} .
\end{eqnarray*}
where
\begin{eqnarray*}
  2 \;H (g, f) & \assign & 2 \;\Delta_g \,f \hspace{0.75em} - \hspace{0.75em} |
  \nabla_g f|_g^2 \hspace{0.75em} + \hspace{0.75em} \tmop{Scal}_g
  \hspace{0.75em} + \hspace{0.75em} 2 \,f \hspace{0.75em} - \hspace{0.75em} 2 n
  \hspace{0.25em} .
\end{eqnarray*}
(We use here the standard identity $\Delta_g e^{- f} = (| \nabla_g f|^2_g -
\Delta_g f) e^{- f}$ .) The importance of the therm $H$ is known from
Perelman's work \cite{Per} (see the subsection \ref{Per-first-varKRF} in the appendix) and it will appear also in our second variation
formula.

Let $(X, J_0)$ be a Fano manifold and let $( \hat{g}_t)_{t \geqslant 0}$ be
the K\"ahler-Ricci flow
\begin{eqnarray*}
  \frac{d}{dt}  \;\hat{g}_t & = & - \;\;\tmop{Ric} ( \hat{g}_t) \hspace{0.75em} +
  \hspace{0.75em} \hat{g}_t,
\end{eqnarray*}
with associated symplectic form $\hat{\omega}_t : = \hat{g}_t J_0 \in 2 \pi
c_1 (X)$. Consider also a solution $( \hat{f}_t)_{t \in [0, T]}$ of Perelman's
backward heat equation
\begin{equation}
  \label{RET-gaug-pot-EVOL} 2 \frac{d}{dt} \hspace{0.25em}  \hat{f}_t
  \hspace{0.75em} = \hspace{0.75em} - \;\;\Delta_{\hat{g}_t}  \hat{f}_t
  \hspace{0.75em} + \hspace{0.75em} | \nabla_{\hat{g}_t}  \hat{f}_t
  |_{\hat{g}_t}^2 \hspace{0.75em} - \hspace{0.75em} \tmop{Scal}_{\hat{g}_t}
  \hspace{0.75em} + \hspace{0.75em} 2 n,
\end{equation}
and set $\Omega \assign e^{- \hat{f}_0} d V_{\hat{g}_0}$. Consider now the
flow of diffeomorphisms $(\Psi_t)_{t \in \left[ 0, T] \right.}$ given by the
equation
\begin{equation}
  \label{diff-flw} 2 \,\frac{d}{dt} \; \Psi_t 
\;\; = \;\;
-\;\;\left( \nabla_{\hat{g}_t}  \hat{f}_t \right) \circ \Psi_t,
\end{equation}
and set $g_t \assign \Psi_t^{\ast}  \hat{g}_t, J_t \assign \Psi_t^{\ast} J_0,
\omega_t \assign \Psi_t^{\ast}  \hat{\omega}_t, f_t \assign \hat{f}_t \circ
\Psi_t$. Then hold the evolution formulas
\begin{eqnarray*}
  \dot{g}_t \;\;\assign\;\; \frac{d}{dt} g_t & = & -\;\; \tmop{Ric} (g_t) \;\;- \;\;\nabla_{g_t} d\,
  f_t \;\;+\;\; g_t\;,\\
  &  & \\
  \dot{\omega}_t \;\;\assign \;\;\frac{d}{dt} \omega_t & = & - \;\;\tmop{Ric}_{_{J_t}}
  (\omega_t) \;-\; i \hspace{0.25em} \partial_{_{J_t}}
  \overline{\partial}_{_{J_t}} f_t \;+\; \omega_t\;,
\end{eqnarray*}
and
\begin{eqnarray*}
  2 \frac{d}{dt} \; f_t & = & -\; \Delta_{g_t} f_t \;
  - \; \tmop{Scal} (g_t) \; + \; 2 n\; .
\end{eqnarray*}
This last rewrites as
\begin{eqnarray*}
  2 \frac{d}{dt} \hspace{0.25em} f_t & = & \tmop{Tr}_{\omega_t}  \frac{d}{dt}
  \hspace{0.25em} \omega_t\;,
\end{eqnarray*}
which is equivalent to the volume form preserving condition $e^{- f_t} d
V_{g_t} = \Omega$.
The flow of K\"ahler structures $(X, J_t, \omega_t)_{t \in [0, T]}$ is called
Perelman's modified K\"ahler-Ricci flow. We set $\mathcal{W}_t : = \mathcal{W}
(g_t, f_t)$ along the modified K\"ahler-Ricci flow and we remind Perelman's \cite{Per}
fundamental identity
\begin{equation}
  \label{Per-var}  \dot{\mathcal{W}}_t \;\; =\;\; \int_X \left|
  \dot{g}_t \right|^2_t \Omega \;.
\end{equation}
Indeed this is equivalent to Perelman's monotony \cite{Per}
of the
$\mathcal{W}$-functional along the K\"ahler-Ricci flow thanks to the
invariance of the $\mathcal{W}$-functional under the action of
diffeomorphisms. In our case it gives $\mathcal{W}_t = \mathcal{W} ( \hat{g}_t, \hat{f}_t)$.
With this notations our result states as follows.
\begin{theorem}
  \label{W-secVar}Along the modified K\"ahler-Ricci flow $(X, J_t,
  \omega_t)_{t \in [0, T]}$ hold the second variation formula
  \begin{eqnarray*}
    \ddot{\mathcal{W}}_t & = & \int_X \left[ 2 \,\big\langle \dot{\omega}_t
    \cdot \tmop{Rm}_{g_t}, \dot{\omega}_t \big\rangle_t \;\,+\;\, 2 \,\big| \nabla_t
    \,H_t \big|^2_t \;\,-\;\, \big| \nabla_t  \,\dot{g}_t \big|^2_t \right] \Omega\;,
  \end{eqnarray*}
  where $\tmop{Rm}_{g_t}\in C^{\infty} (X, \tmop{End} (\Lambda^2 T_X))$ denotes
  the Riemann curvature operator and $H_t \assign H (g_t, f_t) .$
\end{theorem}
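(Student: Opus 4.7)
The starting point is Perelman's fundamental identity \eqref{Per-var}. Since the measure $\Omega$ is time-independent one has
\[
\ddot{\mathcal{W}}_t \;=\; \int_X \partial_t\big(|\dot g_t|^2_{g_t}\big)\,\Omega,
\]
and expanding the time derivative of the pointwise norm produces the pairing $2\langle\ddot g_t,\dot g_t\rangle_t$ plus a cubic-in-$\dot g_t$ correction coming from $\partial_t g_t^{ij}=-\dot g_t^{ij}$. Differentiating the flow equation $\dot g_t = -\tmop{Ric}(g_t) - \nabla_t df_t + g_t$ once more gives an expression for $\ddot g_t$ in terms of $\partial_t\tmop{Ric}(g_t)$, $\partial_t(\nabla_t df_t)$ and $\dot g_t$; for the first two I would substitute the standard Lichnerowicz-type variation formulas in direction $h=\dot g_t$, and eliminate $\dot f_t$ via the volume-preserving relation $2\dot f_t=\tmop{Tr}_{\omega_t}\dot\omega_t$.

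The decisive ingredient, as the introduction stresses, is Perelman's twice contracted Bianchi identity in its $\Omega$-weighted form. A direct computation of the Bakry--\'Emery divergence of the soliton defect tensor $\dot g_t = -(\tmop{Ric}(g_t)+\nabla_t df_t - g_t)$ yields
\[
\nabla_t^j \dot g_{t,ij} \;-\; \dot g_{t,ij}\,\nabla_t^j f_t \;=\; -\,\nabla_{t,i} H_t,
\]
so every divergence of $\dot g_t$ produced by integration by parts against $\Omega$ collapses to a pairing with $\nabla_t H_t$. In particular, the first-order-in-$\dot g_t$ terms of $\partial_t\tmop{Ric}(g_t)$ together with the $\dot\Gamma_t\cdot df_t$ contribution of $\partial_t(\nabla_t df_t)$ combine, after pairing with $\dot g_t$ and integrating against $\Omega$, to produce $2|\nabla_t H_t|^2_t$. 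The rough-Laplacian part of $-\tfrac12\Delta_{L,t}\dot g_t$ inside $\partial_t\tmop{Ric}(g_t)$, integrated by parts against $\dot g_t\,\Omega$, contributes $-|\nabla_t \dot g_t|^2_t$ (plus lower-order terms which are again absorbed via the same identity). The remaining curvature piece of $\Delta_{L,t}$ is reorganised, using the K\"ahler identification of the symmetric $2$-tensor $\dot g_t$ with the real $(1,1)$-form $\dot\omega_t$ (cleanest after pulling back to the plain K\"ahler-Ricci flow, where $\dot{\hat\omega}_t=\dot{\hat g}_t\circ J_0$), as the action of the Riemann curvature operator on $\dot\omega_t\in\Lambda^2 T_X^*$; this yields the term $2\langle\dot\omega_t\cdot\tmop{Rm}_{g_t},\dot\omega_t\rangle_t$, while the Ricci contributions of $\Delta_L$ cancel the cubic $\dot g_t$ correction and the $\tmop{Ric}\cdot\dot g_t$ couplings through a second application of Bianchi.

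The main obstacle is the bookkeeping of all terms that are not a priori of one of the three advertised forms: the cubic-in-$\dot g_t$ correction, the $\dot g_t\cdot\tmop{Hess}_{g_t}f_t$ couplings coming from the Hessian variation, and the $\tmop{Ric}\cdot\dot g_t$ couplings coming from $\Delta_L$. One has to verify that each of them either reorganises into a gradient of $H_t$ via the weighted Bianchi identity above, or cancels against a partner coming from another variational term. The K\"ahler hypothesis plays a crucial double role here: it gives the clean Riemann-operator form of the curvature term (through the $(1,1)$-nature of $\dot\omega_t$), and it makes the weighted Bianchi identity produce exactly the combinations needed for the residual cancellations.
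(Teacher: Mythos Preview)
Your route is genuinely different from the paper's, and the weighted Bianchi identity you isolate is indeed the correct analytic core (it is exactly formula (\ref{AdRic-H})). But you propose to run the whole computation directly along the modified flow, and the author states explicitly in the introduction that he \emph{could not} obtain a simple proof that way. Your proposal does not overcome this: the ``bookkeeping'' you flag as the main obstacle --- the cubic $\dot g$ correction, the $\dot g\cdot\nabla df$ couplings from the Hessian variation, the $\mathrm{Ric}\cdot\dot g$ pieces of $\Delta_L$ --- is precisely where the work lies, and you only assert that these terms ``either reorganise \ldots\ or cancel'' without doing it. As written this is a strategy, not a proof, and one whose feasibility the author himself doubts.

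What the paper actually does is quite different. It works along the \emph{unmodified} K\"ahler--Ricci flow $(\hat g_t,\hat f_t)$ and uses the conjugate heat equation $\Box^*e^{-\hat f}=0$ to turn time differentiation of $\dot{\mathcal W}=\int(|\alpha^*|^2+|A|^2)\,e^{-\hat f}dV_{\hat g}$ into $\ddot{\mathcal W}=-\tfrac12\int\Box(|\alpha^*|^2+|A|^2)\,e^{-\hat f}dV_{\hat g}$. The splitting of $\dot{\hat g}$ into its $J_0$-invariant part $\alpha^*$ and $J_0$-anti-invariant part $A=\bar\partial\nabla\hat f$ is essential: the heat operator $\Box$ is computed on each piece separately in local holomorphic coordinates (Lemma~\ref{Sim-Ric}, identities (\ref{Lap-Rc}) and (\ref{Lap-A})), and the term $|\nabla\dot g|^2$ falls out of the Bochner part of the Laplacian rather than from integrating a Lichnerowicz Laplacian by parts. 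The $|\nabla H|^2$ contribution is produced by a separate device, formula (\ref{Magic-Heat}): one applies $\Box$ to the trivially vanishing integral $\int(\Delta\hat H-\nabla\hat H\cdot\nabla\hat f)\,e^{-\hat f}dV_{\hat g}$ and integrates by parts. Only afterwards, in Section~\ref{modKRF-Magic-Heat}, is (\ref{Magic-Heat}) reinterpreted along the modified flow via the weighted Bianchi identity you cite --- as an a posteriori explanation, not as the engine of the proof. The final statement for the modified flow is then obtained by diffeomorphism invariance.
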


\section{The second variation of $\mathcal{W}$ along the \\K\"ahler-Ricci flow}

All the computations in this section are done with respect to the
K\"ahler-Ricci flow $(X, J_0, \hat{g}_t)$. From now on all the complex operators
will depend on the complex structure $J_0$. 
Therefore we will suppress the
dependence on $J_0$ when it is redundant. Moreover all the covariant
derivatives and norms are computed with respect to the K\"ahler-Ricci flow.
Thus for notation simplicity we will drop the dependence on the evolving
metric and on the time. Along the K\"ahler-Ricci flow we define the heat
operator 
$$
\Box \;\;: =\;\; \Delta \;\;-\;\; 2\; \frac{\partial}{\partial t}
$$ 
and the
conjugate heat operator
\[ \Box^{\ast} \hspace{0.75em} : = \hspace{0.75em} \Delta \hspace{0.75em} +
   \hspace{0.75em} 2 \hspace{0.25em} \frac{\partial}{\partial t}
   \hspace{0.75em} - \hspace{0.75em} \tmop{Scal} \hspace{0.75em}\;\; +\;\;
   \hspace{0.75em} 2 n \hspace{0.25em} . \]
The terminology is justified by the formula
\[ 2 \hspace{0.25em}  \frac{d}{dt} \int_X (a \hspace{0.25em} b)
   \hspace{0.25em} d V_{\hat{g}} \;\;=\;\; - \;\;\int_X \left( \Box \hspace{0.25em} a
   \hspace{0.25em} b \;\;-\;\; a \hspace{0.25em} \Box^{\ast} b \right) d V_{\hat{g}} 
   \hspace{0.25em}, \]
for any $a, b \in C^{\infty} (X \times \R_{\ge 0} \hspace{0.25em}, \R)$. We
observe that with this notations the backward heat equation
(\ref{RET-gaug-pot-EVOL}) is equivalent to the equation 
\begin{eqnarray}
\label{BOX-pot-EVOL}
\Box^{\ast} \,e^{-
\hat{f}} \;\;= \;\; 0\;.
\end{eqnarray}
For any $\alpha \in \Lambda_{_{\mathbbm{R}}}^2 T^{\ast}_X$ we define the
endomorphism $\alpha^{\ast} \assign \hat{\omega}^{- 1} \alpha$. Let now
for notation simplicity
\begin{eqnarray*}
  \tmop{Ric} & \assign & \tmop{Ric}_{_{J_0}} ( \hat{\omega}_t)\;,\\
  &  & \\
  \underline{\tmop{Ric}} & \assign & \tmop{Ric} \;\;+\;\; i \partial
  \overline{\partial}  \hat{f}\;,\\
  &  & \\
  \alpha & \assign & \hat{\omega} \;\; - \;\;
  \underline{\tmop{Ric}}\;,\\
  &  & \\
  A & \assign & \overline{\partial} \,\nabla \hat{f} \;.
\end{eqnarray*}
Thus time deriving Perelman's monotony formula for the $\mathcal{W}$-functional
along the K\"ahler-Ricci flow
\begin{eqnarray*}
  \dot{\mathcal{W}} & = & \int_X \left( | \alpha^{\ast} |^2 \;\;+\;\; |A|^2 \right)
  e^{- \hat{f}} d V_{\hat{g}}\;,
\end{eqnarray*}
(see formula (\ref{W-frst}) in the subsection \ref{Per-first-varKRF} of the appendix) we obtain
\begin{equation}
  \label{pre-SEcW}  \ddot{\mathcal{W}}\;\; =\;\; -\;\; \hspace{0.25em} \frac{1}{2}
  \hspace{0.75em} \int_X \Box \left( | \alpha^{\ast} |^2 \;\;+\;\; |A|^2 \right) e^{-
  \hat{f}} d V_{\hat{g}}  \hspace{0.25em} .
\end{equation}
In the next subsections we will expand the integrand therm.

\subsection{Computation of the time variation of the squared\\ norms}

We introduce first a notation. Let $E : = \tmop{End}_{_{\C}} (T_{_{X,
J_0}})$. For all $\xi \in T_X$ we define the $J_0$-anti-linear operator
$\overline{\partial}_{_E} \tmop{Ric}^{\ast} \cdot \hspace{0.25em} \xi$ as
\[ \left( \overline{\partial}_{_E} \tmop{Ric}^{\ast} \cdot \hspace{0.25em} \xi
   \right) (\eta) : = \left( \overline{\partial}_{_{E, \eta}}
   \tmop{Ric}^{\ast} \right) (\xi) \hspace{0.25em}, \hspace{1em} \forall \eta
   \in T_X \hspace{0.25em} . \]
The evolution equation of the Ricci form $\tmop{Ric}$ and the evolution
equation (\ref{RET-gaug-pot-EVOL}) of $\hat{f}$ imply the identity
\[ 2 \hspace{0.25em} \frac{d}{dt} \; \underline{\tmop{Ric}}
   \hspace{0.75em} = \hspace{0.75em} i \partial \bar{\partial} \left(
   \tmop{Scal} \hspace{0.75em} + \hspace{0.75em} 2 \hspace{0.25em} 
   \frac{d}{dt} \hspace{0.25em} \hat{f} \right) \hspace{0.75em} =
   \hspace{0.75em} i \partial \overline{\partial} \left( | \nabla \hat{f} |^2
   \hspace{0.75em} - \hspace{0.75em} \Delta \hat{f} \right) \hspace{0.25em} .
\]
Moreover
\begin{eqnarray*}
  \frac{d}{dt}  \;	 \underline{\tmop{Ric}} & = & \left(
  \frac{d}{dt} \; \,\hat{\omega} \right)  \hspace{0.25em}
  \underline{\tmop{Ric}}^{\ast}_{} \hspace{0.75em} + \hspace{0.75em}
  \hat{\omega} \; \frac{d}{dt}  \;
  \underline{\tmop{Ric}}^{\ast}_{}\\
  &  & \\
  & = & \underline{\tmop{Ric}} \hspace{0.75em} - \hspace{0.75em} \tmop{Ric}
  \hspace{0.25em} \underline{\tmop{Ric}}^{\ast} \hspace{0.75em} +
  \hspace{0.75em}  \hat{\omega} \; \frac{d}{dt} \;
  \underline{\tmop{Ric}}^{\ast}_{}  \hspace{0.25em},
\end{eqnarray*}
and thus
\[ \frac{d}{dt}  \; \underline{\tmop{Ric}}^{\ast} \hspace{0.75em}
   = \hspace{0.75em}  \left( \frac{d}{dt}  \;
   \underline{\tmop{Ric}}_{} \right)^{\ast} \hspace{0.75em} + \hspace{0.75em}
   \tmop{Ric}^{\ast}  \hspace{0.25em}  \underline{\tmop{Ric}}^{\ast}
   \hspace{0.75em} - \hspace{0.75em}  \underline{\tmop{Ric}}^{\ast} 
   \hspace{0.25em} . \]
The fact that the endomorphism $\alpha^{\ast} = \I \hspace{0.25em} -
\hspace{0.25em} \underline{\tmop{Ric}}^{\ast}$ is $\hat{g}$-symmetric implies the identity
$| \alpha^{\ast} |^2 = \tmop{Tr}_{_{\R}} (\alpha^{\ast})^2$. Time deriving
this identity we infer
\begin{eqnarray*}
  \frac{d}{dt} \;| \alpha^{\ast} |^2 & = & - \hspace{0.25em}
  \tmop{Tr}_{_{\R}} \left( \frac{d}{dt}  \;
  \underline{\tmop{Ric}}^{\ast} \alpha^{\ast} \hspace{0.75em} +
  \hspace{0.75em} \alpha^{\ast}  \hspace{0.25em} \frac{d}{dt}  \;
  \underline{\tmop{Ric}}^{\ast}_{} \right)\\
  &  & \\
  & = & - \;\;2 \tmop{Tr}_{_{\R}} \left( \alpha^{\ast}  \hspace{0.25em}
  \frac{d}{dt}  \; \underline{\tmop{Ric}}^{\ast}_{} \right)\\
  &  & \\
  & = & - \;\;2 \left\langle \alpha^{\ast}, \left( \frac{d}{dt}  \;
  \underline{\tmop{Ric}}_{} \right)^{\ast} \hspace{0.75em} + \hspace{0.75em}
  \tmop{Ric}^{\ast}  \hspace{0.25em} \underline{\tmop{Ric}}^{\ast}_{}
  \hspace{0.75em} - \hspace{0.75em}  \underline{\tmop{Ric}}^{\ast}_{}
  \right\rangle \hspace{0.25em} .
\end{eqnarray*}
In order to obtain the third equality we need to observe the elementary
identities
\begin{eqnarray}\label{Sym-Tr-Rc}
\left\langle \tmop{Ric}^{\ast}  \;
  \underline{\tmop{Ric}}^{\ast}_{}, \alpha^{\ast}  \right\rangle 
&=&
  \tmop{Tr}_{_{\R}}  \left( \tmop{Ric}^{\ast} 
  \underline{\tmop{Ric}}^{\ast} \alpha^{\ast}  \right) \nonumber
\\\nonumber
\\
&=& \tmop{Tr}_{_{\R}} 
  \left( \alpha^{\ast} \tmop{Ric}^{\ast}  \hspace{0.25em}
  \underline{\tmop{Ric}}^{\ast}_{} \right) \nonumber
\\\nonumber
\\
&=& \left\langle \alpha^{\ast},
  \underline{\tmop{Ric}}^{\ast}_{} \tmop{Ric}^{\ast}  \right\rangle 
  \hspace{0.25em} .
\end{eqnarray} 
In conclusion hold the identity
\begin{equation}
  \label{der-Nr-Rc}  \frac{d}{dt} \; | \alpha^{\ast} |^2 
\;\;=\;\;
  \left\langle \alpha^{\ast}, \left[ i \partial \overline{\partial} (\Delta
  \hat{f} \;\, - \;\, | \nabla \hat{f} |^2)
  \right]^{\ast} \;\, - \;\, 2 \hspace{0.25em}
  \tmop{Ric}^{\ast}  \hspace{0.25em} \underline{\tmop{Ric}}^{\ast}
  \;\, + \;\, 2 \hspace{0.25em}  \underline{\tmop{Ric}}
  \hspace{0.25em} \right\rangle  \hspace{0.25em} .
\end{equation}
We compute now the time variation of the operator $A$. Time deriving the
definition of the gradient of $\hat{f}$ we infer
\begin{eqnarray*}
  d \left( \frac{d}{dt} \hspace{0.25em}  \hat{f} \right) & = & \left(
  \frac{d}{dt} \hspace{0.25em} \nabla \hat{f} \right) \neg \;\hat{g}
  \hspace{0.75em} + \hspace{0.75em} \nabla \hat{f}_t \;\neg\; \frac{d}{dt} 
  \hspace{0.25em} \hat{g}\\
  &  & \\
  & = & \left( \frac{d}{dt} \hspace{0.25em} \nabla \hat{f} \right) \neg\;
  \hat{g} \hspace{0.75em} + \hspace{0.75em} d \hat{f} \hspace{0.75em} -
  \hspace{0.75em} \nabla \hat{f} \;\neg\; \tmop{Ric} ( \hat{g}) \hspace{0.25em},
\end{eqnarray*}
and thus using the elementary identity $\tmop{Ric} ( \hat{g}) = -
\hspace{0.25em} \tmop{Ric} \cdot J_0$ we obtain
\begin{eqnarray*}
  \frac{d}{dt} \hspace{0.25em} \nabla \hat{f} & = & \nabla \left( \frac{d}{dt}
  \hspace{0.25em}  \hat{f} \hspace{0.75em} - \hspace{0.75em} \hat{f} \right)
  \hspace{0.75em} + \hspace{0.75em} \tmop{Ric}^{\ast} \cdot \nabla \hat{f}
  \hspace{0.25em} .
\end{eqnarray*}
We deduce the expression
\[ \dot{A} \;\;: =\;\; \frac{d}{dt} \hspace{0.25em} A \hspace{0.75em} =
   \hspace{0.75em} \overline{\partial} \hspace{0.25em} \nabla \left(
   \frac{d}{dt} \hspace{0.25em}  \hat{f} \hspace{0.75em} - \hspace{0.75em}
   \hat{f} \right) \hspace{0.75em} + \hspace{0.75em} \overline{\partial}_{_E}
   \tmop{Ric}^{\ast} \cdot \nabla \hat{f} \hspace{0.75em} + \hspace{0.75em}
   \tmop{Ric}^{\ast} A \hspace{0.25em} . \]
On the other hand $|A|^2 = \tmop{Tr}_{_{\R}} A^2$ since $A$ is also
$\hat{g}$-symmetric. Time deriving this identity we infer the equalities
\begin{eqnarray*}
  \frac{d}{dt} \hspace{0.75em} |A|^2 & = & \tmop{Tr}_{_{\R}}  \left( \dot{A} A
  \hspace{0.75em} + \hspace{0.75em} A \dot{A} \right)\\
  &  & \\
  & = & 2 \left\langle A, \overline{\partial} \hspace{0.25em} \nabla \left(
  \frac{d}{dt} \hspace{0.25em}  \hat{f} \hspace{0.75em} - \hspace{0.75em}
  \hat{f} \right) \hspace{0.75em} + \hspace{0.75em} \overline{\partial}_{_E}
  \tmop{Ric}^{\ast} \cdot \nabla \hat{f} \right\rangle\\
  &  & \\
  & + & 2 \left\langle A^2, \tmop{Ric}^{\ast} \right\rangle \hspace{0.25em} .
\end{eqnarray*}
In order to obtain the last therm we use the identity
\begin{equation}
  \label{Sym-Tr-A} \tmop{Tr}_{_{\R}} \left( A \tmop{Ric}^{\ast} A \right) =
  \left\langle A^2, \tmop{Ric}^{\ast} \right\rangle  \hspace{0.25em} .
\end{equation}
We infer from the evolution equation (\ref{RET-gaug-pot-EVOL}) of $\hat{f}$
the formula
\begin{eqnarray}
  \label{Ev-Nrm-A}  \frac{d}{dt} \hspace{0.75em} |A_{} |^2 & = & \left\langle
  A, \overline{\partial} \hspace{0.25em} \nabla \left( | \nabla \hat{f} |^2
  \hspace{0.75em} - \hspace{0.75em} \Delta \hspace{0.25em} \hat{f}
  \hspace{0.75em} - \hspace{0.75em} \tmop{Scal} \hspace{0.75em} -
  \hspace{0.75em} 2 \hspace{0.25em} \hat{f} \right) \right\rangle \nonumber\\
  &  &  \nonumber\\
  & + & 2 \hspace{0.25em}  \left\langle A, \overline{\partial}_{_E}
  \tmop{Ric}^{\ast} \cdot \nabla \hat{f} \right\rangle \hspace{0.75em} +
  \hspace{0.75em} 2 \left\langle A^2, \tmop{Ric}^{\ast} \right\rangle 
  \hspace{0.25em} . 
\end{eqnarray}

\subsection{Computation of the Laplacian of the squared norms}

At an arbitrary space time point $(x_0, t_0) \in X \times [0, T]$ we fix
$J_0$-holomorphic and $\hat{g}_{t_0}$-geodesic coordinates $(z_1, ..., z_n)$
centered at the point $x_0$ and we set $\zeta_k : = \frac{\partial}{\partial
z_k}$. At the time $t_0$ let $\nabla_{r} : = \nabla_{\zeta_r}$, $\nabla_{\bar{r}} : =
\nabla_{\bar{\zeta}_r}$ and set $e_r : = \zeta_r \; +
\; \bar{\zeta}_r$. Expanding and canceling we obtain the
expression
\begin{equation}
  \label{bas-Lap} \Delta \hspace{0.75em} = \hspace{0.75em} \nabla_{e_r}
  \nabla_{e_r} \hspace{0.75em} + \hspace{0.75em} \nabla_{J_0 e_r}
  \nabla_{J_0 e_r} \hspace{0.75em} = \hspace{0.75em} 2 \hspace{0.25em}
  \nabla_{r} \hspace{0.25em} \nabla_{\bar{r}} \hspace{0.75em} +
  \hspace{0.75em} 2 \hspace{0.25em} \nabla_{\bar{r}} \hspace{0.25em}
  \nabla_{r} \;.
\end{equation}
{\bf{Part I.}} Let
\begin{eqnarray*}
  R_{k, \bar{l}} & \assign & \tmop{Ric} (\zeta_k, \bar{\zeta}_l)\;,\\
  &  & \\
  R'_{k, \bar{l}} & \assign & \underline{\tmop{Ric}} (\zeta_k, \bar{\zeta}_l)\;
  .
\end{eqnarray*}
With this notations hold the local expression
\[ \underline{\tmop{Ric}}^{\ast} \hspace{0.75em} = \hspace{0.75em}
   \mathcal{R}_{k, \bar{l}} \;\, \zeta_k^{\ast} \otimes \zeta_l \;\;+\;\;
   \overline{\mathcal{R}_{k, \bar{l}}}  \;\,\bar{\zeta}_k^{\ast} \otimes
   \bar{\zeta}_l\;, 
\hspace{2em} \mathcal{R}_{k, \bar{l}} \hspace{0.75em} =
   \hspace{0.75em} 2 \hspace{0.25em} R'_{k, \bar{p}} \hspace{0.25em} 
   \hat{\omega}^{p, \bar{l}}\;, \]
where 
$$
\hat{\omega} \;\;=\;\; \frac{i}{2} \;\omega_{k, \bar{l}} \;\zeta_k^{\ast}
\wedge \bar{\zeta}^{\ast}_l\;.
$$ 
Using the expression (\ref{bas-Lap}), the
vanishing properties of the complexified Levi-Civita connection and the
previous identities we infer the expressions at the space time point $(x_0,
t_0)$
\begin{eqnarray*}
  \Delta \hspace{0.25em}  \underline{\tmop{Ric}}^{\ast} & = & 4
  \hspace{0.25em} \partial^2_{r, \bar{r}} \hspace{0.25em} \mathcal{R}_{k,
  \bar{l}} \;\, \zeta_k^{\ast} \otimes \zeta_l \hspace{0.75em} +
  \hspace{0.75em} 4 \hspace{0.25em} \partial^2_{r, \bar{r}} \;
  \overline{\mathcal{R}_{k, \bar{l}}}  \;\,  \bar{\zeta}_k^{\ast}
  \otimes \bar{\zeta}_l\\
  &  & \\
  & + & 2 \hspace{0.25em}  \mathcal{R}_{k, \bar{l}} \hspace{0.25em}
  \nabla_{\bar{r}} \hspace{0.25em} \nabla_{r} \hspace{0.25em}
  \zeta^{\ast}_k \otimes \zeta_l \hspace{0.75em} + \hspace{0.75em} 2
  \hspace{0.25em}  \mathcal{R}_{k, \bar{l}} \hspace{0.25em} \zeta_k^{\ast}
  \otimes \nabla_{\bar{r}} \hspace{0.25em} \nabla_{r} \hspace{0.25em}
  \zeta_l\\
  &  & \\
  & + & 2 \hspace{0.25em}  \mathcal{R}_{l, \bar{k}} \hspace{0.25em}
  \nabla_{r} \hspace{0.25em} \nabla_{\bar{r}} \hspace{0.25em} 
  \bar{\zeta}^{\ast}_k \otimes \bar{\zeta}_l \hspace{0.75em} + \hspace{0.75em}
  2 \hspace{0.25em} \mathcal{R}_{l, \bar{k}} \hspace{0.25em} 
  \bar{\zeta}_k^{\ast} \otimes \nabla_{r} \hspace{0.25em} \nabla_{\bar{r}} \hspace{0.25em}  \bar{\zeta}_l\\
  &  & \\
  & = & 4 \hspace{0.25em} \partial^2_{r, \bar{r}} \hspace{0.25em}
  \mathcal{R}_{k, \bar{l}} \;\, \zeta_k^{\ast} \otimes \zeta_l
  \hspace{0.75em} + \hspace{0.75em} 4 \hspace{0.25em} \partial^2_{r, \bar{r}} \;
  \overline{\mathcal{R}_{k, \bar{l}}}  \;\,  \bar{\zeta}_k^{\ast}
  \otimes \bar{\zeta}_l\\
  &  & \\
  & + & 2 \left( R_{k, \bar{p}} \hspace{0.25em}  \mathcal{R}_{p, \bar{l}}
  \hspace{0.75em} - \hspace{0.75em}  \mathcal{R}_{k, \bar{p}} \hspace{0.25em}
  R_{p, \bar{l}} \right) \zeta_k^{\ast} \otimes \zeta_l\\
  &  & \\
  & + & 2 \left( \mathcal{R}_{l, \bar{p}} \hspace{0.25em} R_{p, \bar{k}}
  \hspace{0.75em} - \hspace{0.75em} R_{l, \bar{p}} \hspace{0.25em} 
  \mathcal{R}_{p, \bar{k}} \right)  \bar{\zeta}_k^{\ast} \otimes
  \bar{\zeta}_l\\
  &  & \\
  & = & 4 \hspace{0.25em} \partial^2_{r, \bar{r}} \hspace{0.25em}
  \mathcal{R}_{k, \bar{l}} \;\, \zeta_k^{\ast} \otimes \zeta_l
  \hspace{0.75em} + \hspace{0.75em} 4 \hspace{0.25em} \partial^2_{r, \bar{r}} \;
  \overline{\mathcal{R}_{k, \bar{l}}}  \;\,  \bar{\zeta}_k^{\ast}
  \otimes \bar{\zeta}_l\\
  &  & \\
  & + & \underline{\tmop{Ric}}^{\ast}_{} \tmop{Ric}^{\ast} \hspace{0.75em} -
  \hspace{0.75em} \tmop{Ric}^{\ast}  \underline{\tmop{Ric}}^{\ast}_{}
  \hspace{0.25em} .
\end{eqnarray*}
At this point we need the following elementary fact.
\begin{lemma}
  \label{Sim-Ric}At the space time point $(x_0, t_0)$ hold the identity
  \begin{eqnarray*}
    \partial^2_{r, \bar{r}} \hspace{0.25em}  \mathcal{R}_{k, \bar{l}} & = &
    \partial^2_{k, \bar{l}} \hspace{0.25em}  \mathcal{R}_{r, \bar{r}}
    \;\;-\;\;\mathcal{R}_{r, \bar{p}} \tmop{Rm}^{r, \bar{p}}_{k, \bar{l}} \;\;+\;\;
    2\,\mathcal{R}_{k, \bar{p}} R_{p, \bar{l}}\;,
  \end{eqnarray*}
  where
  \begin{eqnarray*}
    \tmop{Rm} & = & \tmop{Rm}^{r, \bar{p}}_{k, \bar{l}} \;(\zeta_k^{\ast} \wedge
    \bar{\zeta}^{\ast}_l) \otimes (\zeta_r \wedge \bar{\zeta}_p)\;,\qquad
    \tmop{Rm}^{r, \bar{p}}_{k, \bar{l}}\;\; =\;\; -\;\, 2 \,\partial^2_{k, \bar{l}}
    \hspace{0.25em}  \hat{\omega}_{p, \bar{r}}\; .
  \end{eqnarray*}
\end{lemma}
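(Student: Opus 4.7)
The lemma is a commutation-of-derivatives identity that exploits the fact that $\underline{\tmop{Ric}} = \tmop{Ric} + i\partial\overline{\partial}\hat{f}$ is a closed real $(1,1)$-form. Indeed $\tmop{Ric}$ is a multiple of $c_1(X)$ and hence $d$-closed, while $i\partial\overline{\partial}\hat{f}$ is trivially closed. Splitting $d\,\underline{\tmop{Ric}} = 0$ by bidegree yields $\partial\,\underline{\tmop{Ric}} = \overline{\partial}\,\underline{\tmop{Ric}} = 0$, which in K\"ahler geometry translate into the tensorial Bianchi-type identities
\[
\nabla_a R'_{b,\bar c}\;=\;\nabla_b R'_{a,\bar c}\,,\qquad \nabla_{\bar a}R'_{b,\bar c}\;=\;\nabla_{\bar c}R'_{b,\bar a}\,,
\]
valid before any coordinate choice is made.

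Granted these, I would proceed in three steps. First, apply the holomorphic Bianchi identity inside $\nabla_{\bar r}\nabla_r R'_{k,\bar p}$ to replace $\nabla_r R'_{k,\bar p}$ by $\nabla_k R'_{r,\bar p}$. Second, commute $\nabla_{\bar r}$ past $\nabla_k$ by means of the standard K\"ahler Ricci identity, producing two curvature-endomorphism contributions acting on the $(0,2)$-tensor $R'_{r,\bar p}$, one on the holomorphic index $r$ and one on the antiholomorphic index $\bar p$. Third, apply the antiholomorphic Bianchi identity to rewrite $\nabla_{\bar r}R'_{r,\bar p} = \nabla_{\bar p}R'_{r,\bar r}$. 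Summing on $r$ and assembling the three steps one obtains
\[
\nabla_{\bar r}\nabla_r R'_{k,\bar p}\;=\;\nabla_k\nabla_{\bar p}R'_{r,\bar r}\;+\;(\text{two curvature contractions})\,,
\]
in which the contraction involving the $r$-slot of $R'_{r,\bar p}$ collapses to a Ricci trace once $r$ is summed, while the contraction on the $\bar p$-slot remains a full Riemann contraction.

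It remains to translate from $R'_{k,\bar p}$ and covariant derivatives to $\mathcal{R}_{k,\bar l} = 2\,R'_{k,\bar p}\hat\omega^{p,\bar l}$ and partial derivatives at the geodesic centre $x_0$. Since $\hat\omega$ is parallel, raising the last index commutes with $\nabla$ tensorially and the covariant identity transfers directly to $\mathcal{R}$. At the $\hat{g}_{t_0}$-geodesic centre first partial derivatives of $\hat\omega$ vanish, so $\partial = \nabla$ on scalar components to first order, but at second order one has the correction $\partial^2_{r,\bar r}\hat\omega^{p,\bar l}|_{x_0} = -\,\partial^2_{r,\bar r}\hat\omega_{l,\bar p}|_{x_0}$. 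By the author's normalisation $\tmop{Rm}^{r,\bar p}_{k,\bar l} = -2\,\partial^2_{k,\bar l}\hat\omega_{p,\bar r}$ this second-order correction combines with the Riemann contribution from Step 2 into the single term $-\,\mathcal{R}_{r,\bar p}\tmop{Rm}^{r,\bar p}_{k,\bar l}$, while the Ricci contribution from Step 2, after the index conversion $\mathcal{R} = 2\,R'\cdot\hat\omega^{-1}$, yields $+\,2\,\mathcal{R}_{k,\bar p}R_{p,\bar l}$.

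The conceptual content is entirely classical; the main obstacle is convention bookkeeping. One must keep the K\"ahler Ricci identity, the raising convention $\mathcal{R} = 2R'\cdot\hat\omega^{-1}$, and the curvature normalisation $\tmop{Rm}^{r,\bar p}_{k,\bar l} = -2\,\partial^2_{k,\bar l}\hat\omega_{p,\bar r}$ mutually aligned so that the specific coefficients $-1$ in front of $\tmop{Rm}$ and $+2$ in front of $\tmop{Ric}$ emerge with the correct signs.
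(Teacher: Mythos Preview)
Your approach is valid and takes a different route from the paper's. You extract the index-swapping symmetry from the closedness of $\underline{\tmop{Ric}}$ (the second Bianchi identity for a closed $(1,1)$-form), then generate the curvature terms via the commutator $[\nabla_{\bar r},\nabla_k]$ and a subsequent covariant-to-partial conversion at the centre. The paper, by contrast, never leaves partial derivatives: it verifies the symmetry of $\partial^2_{j,\bar k}R_{h,\bar s}$ in the pairs $(j,h)$ and $(k,s)$ by an explicit fourth-order computation of the metric at the normal-coordinate centre, then applies the Leibniz rule directly to $\mathcal R_{k,\bar l}=2R'_{k,\bar p}\,\hat\omega^{p,\bar l}$, so that every curvature contribution arises from the single source $\partial^2_{\cdot,\cdot}\hat\omega$ with no commutator step at all. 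Your version has the merit of naming the geometric mechanism behind the symmetry; the paper's version keeps the curvature bookkeeping confined to one place and thereby avoids the recombination you allude to in your third paragraph. One caution: the deferred ``convention bookkeeping'' is not a formality here---in your organisation there are several distinct curvature contributions (from the commutator acting on each index of $R'_{r,\bar p}$, and from the conversion between covariant and partial second derivatives on both sides of the identity), and your sketch is a little loose about which of these are Ricci-type versus Riemann-type and how they collapse to exactly the two displayed terms with coefficients $-1$ and $+2$.
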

\begin{proof}
  Space deriving twice the expression of the coefficients $R_{k, \bar{l}}$ and
  using the K\"ahler symmetry relations we find at the space time point $(x_0,
  t_0)$ the expression
  \begin{eqnarray*}
    4 \hspace{0.25em} \partial^2_{j, \bar{k}}\; R_{h, \bar{s}} & = & -
    \;\;4\,  \partial^4_{j, \bar{k}, h, \bar{s}}
    \hspace{0.25em}  \hat{\omega}_{l, \bar{l}} \hspace{0.75em} +
    \hspace{0.75em} \tmop{Rm}_{l, \bar{r}}^{s, \bar{h}} \tmop{Rm}_{j,
    \bar{k}}^{l, \bar{r}} \hspace{0.75em} + \hspace{0.75em} \tmop{Rm}_{l,
    \bar{r}}^{k, \bar{h}} \tmop{Rm}_{j, \bar{s}}^{l, \bar{r}}\;,
  \end{eqnarray*}
  which implies the symmetry of the indices $k, s$ for $\partial^2_{j,
  \bar{k}} R_{h, \bar{s}}$. The symmetry of the indices $j, h$ follows by
  conjugation. Moreover at the space time point $(x_0, t_0)$ hold
  \begin{eqnarray*}
    \partial^2_{r, \bar{s}} \hspace{0.25em}  \mathcal{R}_{k, \bar{l}} & = & 2
    \hspace{0.25em} \partial^2_{r, \bar{s}} \hspace{0.25em} R'_{k, \bar{l}}
    \hspace{0.75em} - \hspace{0.75em} 2 \hspace{0.25em} R'_{k, \bar{p}}
    \hspace{0.25em} \partial^2_{r, \bar{s}} \hspace{0.25em}  \hat{\omega}_{p,
    \bar{l}} \\
    &  & \\
    & = & 2 \hspace{0.25em} \partial^2_{r, \bar{s}} \hspace{0.25em} R_{k,
    \bar{l}} \hspace{0.75em} + \hspace{0.75em} 2 \hspace{0.25em}
    \partial^4_{r, \bar{s}, k, \bar{l}} \hspace{0.25em}  \hat{f}
    \hspace{0.25em} \hspace{0.75em} - \hspace{0.75em} 2 \hspace{0.25em} R'_{k,
    \bar{p}} \hspace{0.25em} \partial^2_{r, \bar{s}} \hspace{0.25em} 
    \hat{\omega}_{p, \bar{l}}  \hspace{0.25em} .
  \end{eqnarray*}
  We infer
  \begin{eqnarray*}
    \partial^2_{r, \bar{s}} \hspace{0.25em}  \mathcal{R}_{k, \bar{l}} & = &
    \partial^2_{k, \bar{l}} \hspace{0.25em}  \mathcal{R}_{r, \bar{s}} \;\;+\;\;
    2\,\mathcal{R}_{r, \bar{p}}\; \partial^2_{k, \bar{l}} \hspace{0.25em} 
    \hat{\omega}_{p, \bar{s}} \;\;-\;\; 2\,\mathcal{R}_{k, \bar{p}} \;\partial^2_{r,
    \bar{s}} \hspace{0.25em}  \hat{\omega}_{p, \bar{l}}\;,
  \end{eqnarray*}
  and thus the required conclusion.
\end{proof}
From lemma \ref{Sim-Ric} we deduce the identity along the K\"ahler-Ricci flow
\begin{equation}
  \label{Lap-Rc} \Delta \hspace{0.25em}  \underline{\tmop{Ric}}^{\ast}
  \hspace{0.75em} = \hspace{0.75em}  \left( i \partial \overline{\partial}
  \tmop{Tr}_{_{\R}}  \underline{\tmop{Ric}}^{\ast}\;\; -\;\; 2\,
  \underline{\tmop{Ric}} \tmop{Rm}_{} \right)^{\ast} \hspace{0.75em} +
  \hspace{0.75em}  \underline{\tmop{Ric}}^{\ast}_{} \tmop{Ric}^{\ast}
  \hspace{0.75em} + \hspace{0.75em} \tmop{Ric}^{\ast}  \hspace{0.25em} 
  \underline{\tmop{Ric}}^{\ast} .
\end{equation}

{\bf{Part II.}} We remind first the local expression
\begin{eqnarray*}
  A & = & A_{k, \bar{l}} \;\, \bar{\zeta}_l^{\ast} \otimes \zeta_k
  \hspace{0.75em} + \hspace{0.75em} \overline{A_{k, \bar{l}}} \;\,
  \zeta_l^{\ast} \otimes \bar{\zeta}_k \hspace{0.25em},\\
  &  & \\
  A_{k, \bar{l}} & = & 2 \;\hat{\omega}^{p, \bar{k}}  \left[
  \partial^2_{\bar{l}, \bar{p}} \hspace{0.25em} \hat{f} \hspace{0.75em} -
  \hspace{0.75em} \partial_{\bar{l}} \hspace{0.25em}  \hat{\omega}_{j,
  \bar{p}}  \; \hat{\omega}^{r, \bar{j}} \hspace{0.25em}
  \partial_{\bar{r}} \hspace{0.25em} \hat{f} \right] \hspace{0.25em} .
\end{eqnarray*}
(See the subsection \ref{loc-expr-antH} in the appendix.) As in part I we infer the expressions at the space time
point $(x_0, t_0)$
\begin{eqnarray*}
  \Delta \hspace{0.25em} A & = & 4 \hspace{0.25em} \partial^2_{r, \bar{r}}
  \hspace{0.25em} A_{k, \bar{l}} \;\,  \bar{\zeta}_l^{\ast} \otimes
  \zeta_k \hspace{0.75em} + \hspace{0.75em} 4 \hspace{0.25em} \partial^2_{r,
  \bar{r}} \hspace{0.25em}  \overline{A_{k, \bar{l}}}  \;\,
  \zeta_l^{\ast} \otimes \bar{\zeta}_k\\
  &  & \\
  & + & 2 \hspace{0.25em} A_{k, \bar{l}} \;\, \nabla_{r}
  \hspace{0.25em} \nabla_{\bar{r}} \hspace{0.25em}  \bar{\zeta}^{\ast}_l
  \otimes \zeta_k \hspace{0.75em} + \hspace{0.75em} 2 \hspace{0.25em}
  \overline{A_{k, \bar{l}}} \;\, \zeta_l^{\ast} \otimes \nabla_{r} \hspace{0.25em} \nabla_{\bar{r}}  \hspace{0.25em} \bar{\zeta}_k\\
  &  & \\
  & + & 2 \hspace{0.25em} A_{k, \bar{l}} \;\,
  \bar{\zeta}_l^{\ast} \otimes \nabla_{\bar{r}} \hspace{0.25em} \nabla_{r} \hspace{0.25em} \zeta_k \hspace{0.75em} + \hspace{0.75em} 2
  \hspace{0.25em} \overline{A_{k, \bar{l}}} \;\, \nabla_{\bar{r}} \hspace{0.25em} \nabla_{r} \hspace{0.25em} \zeta^{\ast}_l
  \otimes \bar{\zeta}_k\\
  &  & \\
  & = & 4 \hspace{0.25em} \partial^2_{r, \bar{r}} \hspace{0.25em} A_{k,
  \bar{l}} \;\,  \bar{\zeta}_l^{\ast} \otimes \zeta_k
  \hspace{0.75em} + \hspace{0.75em} 4 \hspace{0.25em} \partial^2_{r, \bar{r}}
  \hspace{0.25em}  \overline{A_{k, \bar{l}}} \;\, \zeta_l^{\ast}
  \otimes \bar{\zeta}_k \\
  &  & \\
  & + & 2 \left( A_{k, \bar{p}} \hspace{0.25em} R_{p, \bar{l}}
  \hspace{0.75em} - \hspace{0.75em} A_{p, \bar{l}} \hspace{0.25em} R_{p,
  \bar{k}} \right)  \bar{\zeta}_l^{\ast} \otimes \zeta_k\\
  &  & \\
  & + & 2 \left( \hspace{0.25em} \overline{A_{k, \bar{p}}} \hspace{0.25em}
  R_{l, \bar{p}} \hspace{0.75em} - \hspace{0.75em}  \overline{A_{p, \bar{l}}}
  \hspace{0.25em} R_{k, \bar{p}} \right) \zeta_l^{\ast} \otimes \bar{\zeta}_k
  \hspace{0.25em} .
\end{eqnarray*}
Expanding the second order derivative we infer for all indices $k, l$ the
expression
\[ \partial^2_{r, \bar{r}} \hspace{0.25em} A_{k, \bar{l}} \hspace{0.75em} =
   \hspace{0.75em} 2 \hspace{0.25em} \partial^4_{r, \bar{r}, \bar{k}, \bar{l}}
   \hspace{0.25em}  \hat{f} \hspace{0.75em} + \hspace{0.75em} 2
   \hspace{0.25em} \partial_{\bar{l}}\; R_{p, \bar{k}} \hspace{0.25em}
   \partial_{\bar{p}}  \;\hat{f} \hspace{0.75em} + \hspace{0.75em} 2
   \hspace{0.25em} R_{p, \bar{k}} \hspace{0.25em} \partial^2_{\bar{l},
   \bar{p}}  \;\hat{f} \hspace{0.25em} . \]
By plunging this in the previous expression and canceling the adequate therms
we obtain
\begin{eqnarray*}
  \Delta \hspace{0.25em} A & = & \bar{\partial} \hspace{0.25em} \nabla \Delta
  \hat{f} \hspace{0.75em} + \hspace{0.75em} 2 \hspace{0.25em}
  \overline{\partial}_{_E} \tmop{Ric}^{\ast} \cdot \nabla \hat{f}\\
  &  & \\
  & + & 4 \hspace{0.25em} \partial^2_{\bar{l}, \bar{p}}  \;\hat{f}
  \;\, R_{p, \bar{k}} \;\, \bar{\zeta}_l^{\ast} \otimes
  \zeta_k \hspace{0.75em} + \hspace{0.75em} 4 \hspace{0.25em} R_{k, \bar{p}}
  \;\, \partial^2_{p, l}  \;\hat{f} \;\, \zeta_l^{\ast}
  \otimes \bar{\zeta}_k\\
  &  & \\
  & + & 4 \hspace{0.25em} \partial^2_{\bar{k}, \bar{p}}  \;\hat{f}
  \;\, R_{p, \bar{l}} \;\,  \bar{\zeta}_l^{\ast} \otimes
  \zeta_k \hspace{0.75em} + \hspace{0.75em} 4 \hspace{0.25em} R_{l, \bar{p}}
  \;\, \partial^2_{p, k}  \;\hat{f} \;\, \zeta_l^{\ast}
  \otimes \bar{\zeta}_k \hspace{0.25em},
\end{eqnarray*}
and thus the identity
\begin{equation}
  \label{Lap-A} \Delta \hspace{0.25em} A = \overline{\partial}
  \hspace{0.25em} \nabla \Delta \hat{f} \hspace{0.75em} + \hspace{0.75em} 2
  \hspace{0.25em} \overline{\partial}_{_E} \tmop{Ric}^{\ast} \cdot \nabla
  \hat{f} \hspace{0.75em} + \hspace{0.75em} \tmop{Ric}^{\ast} A
  \hspace{0.75em} + \hspace{0.75em} A \tmop{Ric}^{\ast} \hspace{0.25em} .
\end{equation}
In conclusion the identities (\ref{Lap-Rc}), (\ref{Lap-A}), (\ref{Sym-Tr-Rc})
(at the level of scalar products) and (\ref{Sym-Tr-A}) imply
\begin{eqnarray*}
  \Delta \left( | \alpha^{\ast} |^2 \hspace{0.75em} + \hspace{0.75em} |A|^2
  \right) & = & 2 | \nabla \alpha^{\ast} |^2 \hspace{0.75em} + \hspace{0.75em}
  2 | \nabla A|^2\\
  &  & \\
  & - & 2 \left\langle \alpha^{\ast}, \Delta \hspace{0.25em} 
  \underline{\tmop{Ric}}^{\ast}  \right\rangle \hspace{0.75em} +
  \hspace{0.75em} 2 \left\langle A, \Delta A \right\rangle\\
  &  & \\
  & = & 2 | \nabla \hspace{0.25em} \alpha^{\ast} |^2 \hspace{0.75em} +
  \hspace{0.75em} 2 | \nabla A|^2\\
  &  & \\
  & - & 2 \left\langle \alpha^{\ast}, \left[ i \partial \overline{\partial}
  \left( \tmop{Scal} \hspace{0.75em} + \hspace{0.75em} \Delta \hat{f} \right)
  \right]^{\ast} \right\rangle\\
  &  & \\
  & + & 4 \left\langle \alpha^{\ast}, \left( \underline{\tmop{Ric}}
  \tmop{Rm}_{} \right)^{\ast} \hspace{0.75em} - \hspace{0.75em}
  \tmop{Ric}^{\ast}  \hspace{0.25em}  \underline{\tmop{Ric}}^{\ast}
  \right\rangle\\
  &  & \\
  & + & 2 \left\langle A, \overline{\partial} \hspace{0.25em} \nabla \Delta
  \hat{f} \hspace{0.75em} + \hspace{0.75em} 2 \hspace{0.25em}
  \overline{\partial}_{_E} \tmop{Ric}^{\ast} \cdot \nabla \hat{f}
  \right\rangle\\
  &  & \\
  & + & 4 \left\langle A^2, \tmop{Ric}^{\ast}  \right\rangle \hspace{0.25em}
  .
\end{eqnarray*}

\subsection{Evolution of the squared norms}

The previous expression of the Laplacian of the squared norms combined with the
formulas (\ref{der-Nr-Rc}) and (\ref{Ev-Nrm-A}) provides the identity
\begin{eqnarray*}
  \Box \left( | \alpha^{\ast} |^2 \hspace{0.75em} + \hspace{0.75em} |A|^2
  \right) & = & 2 | \nabla \alpha^{\ast} |^2 \hspace{0.75em} + \hspace{0.75em}
  2 | \nabla A|^2\\
  &  & \\
  & - & 2 \left\langle \alpha^{\ast}, \left[ i \partial \overline{\partial}
  \left( 2 \hspace{0.25em} \Delta \hat{f} \hspace{0.75em} - \hspace{0.75em} |
  \nabla \hat{f} |^2 \hspace{0.75em} + \hspace{0.75em} \tmop{Scal} \right)
  \right]^{\ast} \right\rangle\\
  &  & \\
  & + & 2 \left\langle A, \overline{\partial} \hspace{0.25em} \nabla \left( 2
  \hspace{0.25em} \Delta \hat{f} \hspace{0.75em} - \hspace{0.75em} | \nabla
  \hat{f} |^2 \hspace{0.75em} + \hspace{0.75em} \tmop{Scal} \hspace{0.75em} +
  \hspace{0.75em} 2 \hspace{0.25em} \hat{f} \right) \right\rangle\\
  &  & \\
  & + & 4 \left\langle \alpha^{\ast}, \left( \underline{\tmop{Ric}}^{\ast}_{}
  \tmop{Rm} \right)^{\ast}  \right\rangle \hspace{0.75em} - \hspace{0.75em} 4
  \left\langle \alpha^{\ast}, \underline{\tmop{Ric}}^{\ast}_{}  \right\rangle 
  \hspace{0.25em} .
\end{eqnarray*}
Thus if we set 
$$
2\; \hat{H} \;\; : = \;\; 2\; H ( \hat{g},
\hat{f}) \;\; = \;\; \Box\, \hat{f} \;\; +\;\;
2 \; \hat{f}\;,
$$
we infer the evolution formula
\begin{eqnarray}
  \Box \left( | \alpha^{\ast} |^2 \hspace{0.75em} + \hspace{0.75em} |A|^2
  \right) & = & 2 | \nabla \alpha^{\ast} |^2 \hspace{0.75em} + \hspace{0.75em}
  2 | \nabla A|^2 \nonumber\\
  &  &  \nonumber\\
  & - & 4 \left\langle \alpha^{\ast}, (i \partial \overline{\partial} 
  \hat{H})^{\ast} \right\rangle \hspace{0.75em} + \hspace{0.75em} 4
  \left\langle A, \overline{\partial} \hspace{0.25em} \nabla \hat{H}
  \right\rangle \nonumber\\
  &  &  \nonumber\\
  & - & 4 \left\langle \alpha \tmop{Rm}, \alpha \right\rangle \hspace{0.25em}
  .  \label{ev-Nr2}
\end{eqnarray}

\subsection{Integration by parts along the K\"ahler-Ricci flow}

This last step is the key part of the proof of the second variation of
Perelman's $\mathcal{W}$-functional along the K\"ahler-Ricci flow. Deriving
the identity
\[ 0 \hspace{0.75em} = \hspace{0.75em} \int_X \left( \Delta \,\hat{H}
   \hspace{0.75em} - \hspace{0.75em} \nabla \hat{H} \cdot \nabla \hat{f}
   \hspace{0.25em} \right) e^{- \hat{f}} d V_{\hat{g}} \hspace{0.25em}, \]
we obtain
\begin{equation}
  \label{der-W-k} 0 = \int_X \Box \left( \Delta \,\hat{H} \hspace{0.75em} -
  \hspace{0.75em} \nabla \hat{H} \cdot \nabla \hat{f} \hspace{0.25em} \right)
  e^{- \hat{f}} d V_{\hat{g}}  \hspace{0.25em} ,
\end{equation}
thanks to (\ref{BOX-pot-EVOL}). We expand now the integrand therm in (\ref{der-W-k}). We observe first the identity
\begin{eqnarray*}
  \Box \,\Delta \,\hat{H} & = & \Delta \, \Box \,
  \hat{H} \hspace{0.75em} + \hspace{0.75em} 2 \hspace{0.25em} \Delta\, \hat{H}
  \hspace{0.75em} - \hspace{0.75em} 2 \left\langle \tmop{Ric}, i \partial
  \overline{\partial}  \hat{H} \right\rangle\\
  &  & \\
  & = & 2 \hspace{0.25em} \Delta \,H_2 \hspace{0.75em} - \hspace{0.75em} 2
  \left\langle \tmop{Ric}, i \partial \overline{\partial}  \hat{H}
  \right\rangle \hspace{0.25em},
\end{eqnarray*}
where 
$$
2 \;H_2 \;\;: =\;\;\Box \; \hat{H} \;\;+\;\; 2\; \hat{H}\;.
$$ 
(For more details on
this type of computations see the proof of the formula (\ref{ev-H}) in the subsection \ref{Per-first-varKRF} of the
appendix). We compute next the heat therm 
$$
\Box \hspace{0.25em} (\nabla \hat{H}
\cdot \nabla \hat{f})\;.
$$
We observe first the expression of the time derivative
\begin{eqnarray*}
  \frac{\partial}{\partial t}  \left( \nabla \hat{H} \cdot \nabla \hat{f}
  \right) & = & \frac{\partial}{\partial t}  \left( d \hat{H} \cdot \nabla
  \hat{f} \right)\\
  &  & \\
  & = & d \left( \frac{\partial}{\partial t}  \hspace{0.25em} \hat{H} \right)
  \cdot \nabla \hat{f} \hspace{0.75em} + \hspace{0.75em} d \hat{H} \cdot
  \frac{\partial}{\partial t} \hspace{0.25em} \nabla \hat{f}\\
  &  & \\
  & = & \nabla \left( \frac{\partial}{\partial t} \hspace{0.25em}  \hat{H}
  \right) \cdot \nabla \hat{f} \hspace{0.75em} + \hspace{0.75em} \nabla
  \hat{H} \cdot \nabla \left( \frac{\partial}{\partial t}  \hspace{0.25em}
  \hat{f} \hspace{0.75em} - \hspace{0.75em} \hat{f} \right)\\
  &  & \\
  & + & \tmop{Ric} \left( \nabla \hat{f}, J_0 \nabla \hat{H} \right) 
  \hspace{0.25em} .
\end{eqnarray*}
We expand the Laplacian
\begin{eqnarray*}
  \Delta \left( \nabla \hat{H} \cdot \nabla \hat{f} \right) & = & 8
  \hspace{0.25em} \partial^2_{r, \bar{r}}  \left[ \hat{\omega}^{k, \bar{l}}
  \left( \hat{H}_l \hspace{0.25em}  \hat{f}_{\bar{k}} \hspace{0.75em} +
  \hspace{0.75em}  \hat{f}_l  \hspace{0.25em} \hat{H}_{\bar{k}} \right)
  \right]\\
  &  & \\
  & = & - \hspace{0.25em} 8 \hspace{0.25em} \partial^2_{r, \bar{r}}
  \hspace{0.25em} \hat{\omega}_{k, \bar{l}} \hspace{0.25em} \left(
  \hat{H}_l \hspace{0.25em}  \hat{f}_{\bar{k}} \hspace{0.75em} +
  \hspace{0.75em} \hat{f}_l \hspace{0.25em}  \hat{H}_{\bar{k}} \right)
  \hspace{0.75em} \\
  &  & \\
  & + & 8 \hspace{0.25em} \partial^2_{r, \bar{r}} \hspace{0.25em}  \left(
  \hat{H}_l \hspace{0.25em} \hat{f}_{\bar{k}} \hspace{0.75em} +
  \hspace{0.75em} \hat{f}_l \hspace{0.25em}  \hat{H}_{\bar{k}} \right)
\end{eqnarray*}
\begin{eqnarray*}
& = & 8 \hspace{0.25em} R_{k, \bar{l}} \hspace{0.25em}  \left( \hat{H}_l
  \hspace{0.25em}  \hat{f}_{\bar{k}} \hspace{0.75em} + \hspace{0.75em} 
  \hat{f}_l \hspace{0.25em}  \hat{H}_{\bar{k}} \right)\\
  &  & \\
  & + & 2 \partial_k \Delta \hat{H} \hspace{0.25em} \hat{f}_{\bar{k}}
  \hspace{0.75em} + \hspace{0.75em} 8 \hat{H}_{k, \bar{l}} \hspace{0.25em} 
  \hat{f}_{\bar{k}, l} \hspace{0.75em} \\
  &  & \\
  & + & 8 \hat{H}_{k, l} \hspace{0.25em}  \hat{f}_{\bar{k}, \bar{l}}
  \hspace{0.75em} + \hspace{0.75em} 2 \hat{H}_k \hspace{0.25em}
  \partial_{\bar{k}} \Delta \hat{f}\\
  &  & \\
  & + & 2 \partial_k \Delta \hat{f} \hspace{0.25em}  \hat{H}_{\bar{k}}
  \hspace{0.75em} + \hspace{0.75em} 8 \hat{f}_{k, \bar{l}} \hspace{0.25em}
  \hat{H}_{\bar{k}, l} \hspace{0.75em}\\
  &  & \\
  & + & 8 \hat{f}_{k, l} \hspace{0.25em} \hat{H}_{\bar{k}, \bar{l}}
  \hspace{0.75em} + \hspace{0.75em} 2 \hat{f}_k \hspace{0.25em}
  \partial_{\bar{k}} \Delta \hat{H}\\
  &  & \\
  & = & 2 \tmop{Ric} (\nabla \hat{H}, J_0 \nabla \hat{f}) \hspace{0.75em} +
  \hspace{0.75em} \nabla \Delta \hat{H} \cdot \nabla \hat{f} \hspace{0.75em} +
  \hspace{0.75em} \nabla \hat{H} \cdot \nabla \Delta \hat{f}\\
  &  & \\
  & + & 2 \left\langle i \partial \overline{\partial}  \hat{H}, i \partial
  \overline{\partial}  \hat{f} \right\rangle \hspace{0.75em} + \hspace{0.75em}
  2 \left\langle \overline{\partial} \hspace{0.25em} \nabla \hat{H},
  \overline{\partial} \hspace{0.25em} \nabla \hat{f} \right\rangle 
  \hspace{0.25em} .
\end{eqnarray*}
We infer the equality
\begin{eqnarray*}
  \Box \left( \nabla \hat{H} \cdot \nabla \hat{f} \right) & = & \nabla
  \hspace{0.25em} \Box \hspace{0.25em} \hat{H} \cdot \nabla \hat{f}
  \hspace{0.75em} + \hspace{0.75em} \nabla \hat{H} \cdot \nabla
  \hspace{0.25em} \Box \hspace{0.25em}  \hat{f} \hspace{0.75em} +
  \hspace{0.75em} 2 \hspace{0.25em} \nabla \hat{H} \cdot \nabla \hat{f}\\
  &  & \\
  & + & 2 \left\langle i \partial \overline{\partial}  \hat{H}, i \partial
  \overline{\partial}  \hat{f} \right\rangle \hspace{0.75em} + \hspace{0.75em}
  2 \left\langle \overline{\partial} \hspace{0.25em} \nabla \hat{H},
  \overline{\partial} \hspace{0.25em} \nabla \hat{f} \right\rangle\\
  &  & \\
  & = & 2 \hspace{0.25em} \nabla H_2 \cdot \nabla \hat{f} \hspace{0.75em} +
  \hspace{0.75em} 2 \hspace{0.25em} | \nabla \hat{H} |^2 \hspace{0.75em} -
  \hspace{0.75em} 2 \hspace{0.25em} \nabla \hat{H} \cdot \nabla \hat{f}\\
  &  & \\
  & + & 2 \left\langle i \partial \overline{\partial}  \hat{H}, i \partial
  \overline{\partial}  \hat{f} \right\rangle \hspace{0.75em} + \hspace{0.75em}
  2 \left\langle \overline{\partial} \hspace{0.25em} \nabla \hat{H},
  \overline{\partial} \hspace{0.25em} \nabla \hat{f} \right\rangle
  \hspace{0.25em} .
\end{eqnarray*}
Finally we obtain the identity
\begin{eqnarray*}
  \Box \left( \Delta \,\hat{H}\;\; -\;\; \nabla \hat{H} \cdot \nabla \hat{f} \right) & =
  & 2 \hspace{0.25em} \Delta\, H_2 \hspace{0.75em} - \hspace{0.75em} 2
  \hspace{0.25em} \nabla H_2 \cdot \nabla \hat{f}\\
  &  & \\
  & + & 2 \hspace{0.25em} \nabla \hat{H} \cdot \nabla \hat{f} \hspace{0.75em}
  - \hspace{0.75em} 2 \hspace{0.25em} | \nabla \hat{H} |^2\\
  &  & \\
  & - & 2 \left\langle \tmop{Ric} \hspace{0.75em} + \hspace{0.75em} i
  \partial \overline{\partial}  \hat{f}, i \partial \overline{\partial} 
  \hat{H} \right\rangle \hspace{0.75em} - \hspace{0.75em} 2 \left\langle A,
  \overline{\partial} \hspace{0.25em} \nabla \hat{H} \right\rangle
  \hspace{0.25em} .
\end{eqnarray*}
By adding and subtracting the therm $2 \Delta \hat{H}$ we obtain the
evolution formula
\begin{eqnarray*}
  \Box \left( \Delta \,\hat{H} \;\;-\;\; \nabla \hat{H} \cdot \nabla \hat{f} \right) & =
  & 2 \hspace{0.25em} \Delta\, H_2 \hspace{0.75em} - \hspace{0.75em} 2
  \hspace{0.25em} \nabla H_2 \cdot \nabla \hat{f}\\
  &  & \\
  & + & 2 \hspace{0.25em} \nabla \hat{H} \cdot \nabla \hat{f} \hspace{0.75em}
  - \hspace{0.75em} 2 \,\Delta \,\hat{H} \hspace{0.75em} - \hspace{0.75em} 2
  \hspace{0.25em} | \nabla \hat{H} |^2\\
  &  & \\
  & + & 2 \left\langle \alpha^{\ast}, (i \partial \overline{\partial} 
  \hat{H})^{\ast} \right\rangle \hspace{0.75em} - \hspace{0.75em} 2
  \left\langle A, \overline{\partial} \hspace{0.25em} \nabla \hat{H}
  \right\rangle \hspace{0.25em},
\end{eqnarray*}
which combined with the identity (\ref{der-W-k}) implies the formula
\begin{equation}
  \label{Magic-Heat} \int_X | \nabla \hat{H} |^2  \hspace{0.25em} e^{-
  \hat{f}} d V_{\hat{g}} \hspace{0.75em} = \hspace{0.75em} \int_X \left[
  \left\langle \alpha^{\ast}, (i \partial \overline{\partial}  \hat{H})^{\ast}
  \right\rangle \hspace{0.75em} - \hspace{0.75em}  \left\langle A,
  \overline{\partial} \hspace{0.25em} \nabla \hat{H} \right\rangle \right]
  e^{- \hat{f}} d V_{\hat{g}}  \hspace{0.25em},
\end{equation}
via integration by parts. Combining the equality (\ref{Magic-Heat}) with the identity
(\ref{pre-SEcW}) and with the evolution formula (\ref{ev-Nr2}) we infer the second variation
identity
\begin{eqnarray*}
  \ddot{\mathcal{W}} & = & \int_X \left[ 2 \left\langle \alpha \tmop{Rm},
  \alpha \right\rangle \hspace{0.75em} + \hspace{0.75em} 2 \hspace{0.25em} |
  \nabla \hat{H} |^2 \hspace{0.75em} - \hspace{0.75em} | \nabla \alpha^{\ast}
  |^2 \hspace{0.75em} - \hspace{0.75em} | \nabla A|^2 \right] e^{- \hat{f}} d
  V_{\hat{g}}  \hspace{0.25em} .
\end{eqnarray*}
By using the diffeomorphisms invariance of the integrals on the r.h.s. we infer
the conclusion of theorem \ref{W-secVar}.{\hspace*{\fill}}
\section{Interpretation of formula (\ref{Magic-Heat}) along the \\modified
K\"ahler-Ricci flow}\label{modKRF-Magic-Heat}

We remind that the $\Omega$-Bakry-Emery-Ricci tensor of a metric $g$ is
defined by the formula
\[ \tmop{Ric}_g (\Omega) \hspace{0.75em} : = \hspace{0.75em} \tmop{Ric} (g)
   \hspace{0.75em} + \hspace{0.75em} \nabla_g \,d\, \log \frac{dV_g}{\Omega} \;. \]
We denote by $\tmop{Ric}^{\ast}_g (\Omega) \assign g^{- 1} \tmop{Ric}_g
(\Omega)$ the endomorphism section associated to the
$\Omega$-Bakry-Emery-Ricci tensor. We set $f \;\assign\; \log \frac{dV_g}{\Omega}$
and we observe that the operator
\begin{eqnarray*}
  \nabla^{\ast_{_{\Omega}}}_g & : = & e^f \nabla^{\ast}_g  \left( e^{- f}
  \bullet \right) \;\;=\;\; \nabla^{\ast}_g \;\,+\;\, \nabla_g\, f \;\neg\;,
\end{eqnarray*}
is the formal adjoint of $\nabla_g$ with respect to the scalar product $\int_X
\left\langle \cdot, \cdot \right\rangle_g \Omega$. \ \

We show now Perelman's version of the twice contracted differential Bianchi
identity (see \cite{Per})
\begin{equation}
  \label{AdRic-H} \nabla^{\ast_{_{\Omega}}}_g \tmop{Ric}^{\ast}_g (\Omega) \;\;=\;\;
  \nabla_g \,(f\;\, -\;\, H)\;,
\end{equation}
In fact expanding the l.h.s we obtain
\begin{eqnarray*}
  2 \,\nabla^{\ast_{_{\Omega}}}_g \tmop{Ric}^{\ast}_g (\Omega) & = & 2\,
  \nabla^{\ast}_g \tmop{Ric}^{\ast}_g (\Omega) \;\,+\;\, 2\, \tmop{Ric}^{\ast}_g
  (\Omega) \cdot \nabla_g \,f\\
  &  & \\
  & = & 2 \,\nabla^{\ast}_g \tmop{Ric}^{\ast}_g \;\,+\;\, 2\, \nabla^{\ast}_g \nabla^2_g
  f 
\\
\\
&+& 2\, \tmop{Ric}^{\ast}_g \cdot \nabla_g \,f \;\,+\;\, 2\, \nabla^2_g \,f \cdot \nabla_g\,
  f\\
  &  & \\
  & = & - \;\,\nabla_g \tmop{Scal}_g \;\,-\;\, 2\, \Delta_g \nabla_g \,f 
\\
\\
&+& 2\,
  \tmop{Ric}^{\ast}_g \cdot \nabla_g \,f \;\,+\;\, \nabla_g | \nabla_g \,f|_g^2\;,
\end{eqnarray*}
thanks to the twice contracted Bianchi identity. Then formula (\ref{AdRic-H})
follows from the identity
\begin{eqnarray*}
  \nabla_g \,\Delta_g \,f & = & \Delta_g \nabla_g \,f \;\,-\;\, \tmop{Ric}^{\ast}_g \cdot
  \nabla_g \,f \;.
\end{eqnarray*}
We remind now that any smooth volume form $\Omega > 0$ over a complex manifold
$(X, J)$ of complex dimension $n$ induces a hermitian metric $h_{\Omega}$
over the canonical bundle $K_{_{X, J}} \assign \Lambda_{_J}^{n, 0}
T^{\ast}_{_X} $ given by the formula
\[ h_{\Omega} (\alpha, \beta) \;\;\assign\;\; \frac{n!\, i^{n^2} \alpha \wedge
   \overline{\beta} }{\Omega} \;. \]
By abuse of notations we will denote by $\Omega^{- 1}$ the metric $h_{\Omega}
.$ The dual metric $h_{\Omega}^{\ast}$ on the anti canonical bundle $K^{-
1}_{_{X, J}} = \Lambda_{_J}^{n, 0} T_{_X}$ is given by the formula
\[ h_{\Omega}^{\ast} (\xi, \eta) \;\;=\;\; (- i)^{n^2} \Omega \left( \xi_{},
   \bar{\eta} \right) / n! \;. \]
Abusing notations again, we denote by $\Omega$ the dual metric
$h_{\Omega}^{\ast}$. We define the $\Omega$-Ricci form
$$
\tmop{Ric}_{_J} \left( \Omega \right) \;\;\assign\;\; i\,\mathcal{C}_{\Omega}  \big(
   K^{- 1}_{_{X, J}} \big) \;\;=\;\; - \;\,i\,\mathcal{C}_{\Omega^{- 1}} \big( K_{_{X,
   J}} \big) \;, 
$$
where $\mathcal{C}_h(L)$ denotes the Chern curvature of a hermitian line bundle $(L,h)$.
In particular $\tmop{Ric}_{_J} (\omega) = \tmop{Ric}_{_J} (\omega^n)$. We
remind also that for any $J$-invariant K\"ahler metric $g$ the associated
symplectic form $\omega \assign g J$ satisfies the elementary identity $\tmop{Ric} (g)  =  - \; \tmop{Ric}_{_J} (\omega)\, J$.
Moreover for all twice differentiable function $u$ hold the identity
\begin{eqnarray*}
  \nabla_g \,d\,u & = & - \;\, \big(i\, \partial_{_J} \overline{\partial}_{_J}
  u \big)\, J \;\, + \;\, g \,\overline{\partial}_{_{T_{X,
  J}}} \nabla_g \,u\; .
\end{eqnarray*}
We infer the decomposition identity
\begin{eqnarray*}
  \tmop{Ric}_g (\Omega) \;\; = \;\; - \hspace{0.25em} \tmop{Ric}_{_J} (\Omega) \,J
  \;\, + \;\, g \,\overline{\partial}_{_{T_{X, J}}}
  \nabla_g \log \frac{dV_g}{\Omega}\;,
\end{eqnarray*}
and thus the identity
\begin{equation}
  \label{dec-end-Ric} \tmop{Ric}^{\ast}_{g_{_{}}} (\Omega) \;\;=\;\;
  \tmop{Ric}^{\ast}_{_J} (\Omega) \;\,+\;\, \overline{\partial}_{_{T_{X, J}}}
  \nabla_g \log \frac{d V_g}{\Omega} \;,
\end{equation}
where $\tmop{Ric}^{\ast}_{_J} (\Omega) : = \omega^{- 1} \tmop{Ric}_{_J} (\Omega)$.
We will denote by $\left\langle \cdot, \cdot \right\rangle_{\omega} $the
hermitian product on $T_X$-valued forms induced by the hermitian metric on
$T_{X, J}$. 
The formal adjoint of the $\partial^g_{_{T_{X, J}}}$-operator with
respect to the $L^2$-hermitian product $\int_X \left\langle \cdot, \cdot
\right\rangle_{\omega} \Omega$, is the operator
\begin{eqnarray*}
  \partial^{\ast_{g, \Omega}}_{_{T_{X, J}}}  \;\;: =\;\;  e^f\,
  \partial^{\ast_g}_{_{T_{X, J}}} \left( e^{- f} \bullet \right) \;.
\end{eqnarray*}
In a similar way the formal adjoint of the $\overline{\partial}_{_{T_{X,
J}}}$-operator with respect to the $L^2$-hermitian product $\int_X \left\langle
\cdot, \cdot \right\rangle_{\omega} \Omega$, is the operator
\begin{eqnarray*}
  \overline{\partial}^{\ast_{g, \Omega}}_{_{T_{X, J}}} \;\;: = \;\; e^f\,
  \overline{\partial}^{\ast_g}_{_{T_{X, J}}} \left( e^{- f} \bullet \right) \;.
\end{eqnarray*}
With this notations hold the decomposition formula
\begin{eqnarray*}
  p \,\nabla^{\ast_{_{\Omega}}}_g \;\; = \;\; \partial^{\ast_{g, \Omega}}_{_{T_{X,
  J}}} \;\,+\;\, \overline{\partial}^{\ast_{g, \Omega}}_{_{T_{X, J}}}\;,
\end{eqnarray*}
at the level of $T_X$-valued $p$-forms. (See the appendix in \cite{Pal}). We observe
also that the identity $d \tmop{Ric}_{_J} (\Omega) = 0$ is equivalent to the
identity $\partial_{_J} \tmop{Ric}_{_J} (\Omega) = 0,$ which in its turn is
equivalent to the identity
\begin{eqnarray*}
  \partial_{_{T_{X, J}}}^g \tmop{Ric}^{\ast}_{_J} (\Omega) \;\; = \;\; 0\; .
\end{eqnarray*}
We deduce
\begin{eqnarray*}
  \nabla^{\ast_{_{\Omega}}}_g \tmop{Ric}^{\ast}_{_{J_t}} (\Omega) & = &
  \partial^{\ast_{g_t, \Omega}}_{_{T_{X, J_t}}} \tmop{Ric}^{\ast}_{_{J_t}}
  (\Omega) \;.
\end{eqnarray*}
thanks to a basic K\"ahler identity. We observe now that by the diffeomorphisms invariance of the
integrals formula (\ref{Magic-Heat}) writes as
\begin{eqnarray*}
  \int_X | \nabla_t H_t |_t^2 \,\Omega & = &  -\;\, \int_X \left\langle
  \tmop{Ric}^{\ast}_{_{J_t}} (\Omega) -\mathbbm{I}_{_{T_X}},
  \partial^{g_t}_{_{T_{X, J_t}}} \nabla_t H_t \right\rangle_t 
\\
\\
&-&
  \int_X\left\langle \overline{\partial}_{_{T_{X, J_t}}} \nabla_t\,
  f_t\,,\, \overline{\partial}_{_{T_{X, J}}} \nabla_t H_t \right\rangle_t\Omega\;,
\end{eqnarray*}
along the modified K\"ahler-Ricci flow. We show now this formula by using the
previous considerations. Using the comparison of norms on $T_{_{X, J}}$-valued forms in the subsection \ref{comp-CX-norms} of the appendix we
expand the integral therm
\begin{eqnarray*}
  & - & \int_X \left[ \left\langle \tmop{Ric}^{\ast}_{_{J_t}} (\Omega)\,,
  \partial^{g_t}_{_{T_{X, J_t}}} \nabla_t H_t \right\rangle_t \;\,
  + \;\, \left\langle \overline{\partial}_{_{T_{X, J_t}}} \nabla_t\,
  f_t\,, \overline{\partial}_{_{T_{X, J}}} \nabla_t H_t \right\rangle_t \,\right]
  \Omega\\
  &  & \\
  & = &  - \;\,\frac{1}{2} \,\int_X \left[ \left\langle \tmop{Ric}^{\ast}_{_{J_t}}
  (\Omega)\,, \partial^{g_t}_{_{T_{X, J_t}}} \nabla_t H_t
  \right\rangle_{\omega_t} 
\;\,+ \;\,
 \left\langle
  \partial^{g_t}_{_{T_{X, J_t}}} \nabla_t H_t\,, \tmop{Ric}^{\ast}_{_{J_t}}
  (\Omega) \right\rangle_{\omega_t} \right] \Omega\\
  &  & \\
  & - & \frac{1}{2} \,\int_X \left[ \left\langle \overline{\partial}_{_{T_{X,
  J_t}}} \nabla_t \,f_t\,, \overline{\partial}_{_{T_{X, J}}} \nabla_t H_t
  \right\rangle_{\omega_t} 
\;\, + \;\,  \left\langle
  \overline{\partial}_{_{T_{X, J_t}}} \nabla_t H_t\,,
  \overline{\partial}_{_{T_{X, J}}} \nabla_t \,f_t \right\rangle_{\omega_t}
  \right] \Omega\\
  &  & \\
  & = &  - \;\,\frac{1}{2} \,\int_X \left[ \left\langle \partial^{\ast_{g_t,
  \Omega}}_{_{T_{X, J_t}}} \tmop{Ric}^{\ast}_{_{J_t}} (\Omega)\,, \nabla_t H_t
  \right\rangle_{\omega_t} 
\;\,+ \;\,
  \left\langle
  \nabla_t H_t\,, \partial^{\ast_{g_t, \Omega}}_{_{T_{X, J_t}}}
  \tmop{Ric}^{\ast}_{_{J_t}} (\Omega) \right\rangle_{\omega_t} \right]
  \Omega\\
  &  & \\
  & - & \frac{1}{2} \int_X \left[ \left\langle
  \overline{\partial}^{\ast_{g_t, \Omega}}_{_{T_{X, J_t}}}
  \overline{\partial}_{_{T_{X, J_t}}} \nabla_t \,f_t\,, \nabla_t H_t
  \right\rangle_{\omega_t} \;\, + \;\,  \left\langle
  \nabla_t H_t\,, \overline{\partial}^{\ast_{g_t, \Omega}}_{_{T_{X, J_t}}}
  \overline{\partial}_{_{T_{X, J_t}}} \nabla_t \,f_t \right\rangle_{\omega_t}
  \right] \Omega\\
  &  & \\
  & = &  - \;\,\int_X \left[ \left\langle \partial^{\ast_{g_t, \Omega}}_{_{T_{X,
  J_t}}} \tmop{Ric}^{\ast}_{_{J_t}} (\Omega)\,, \nabla_t H_t \right\rangle_t
  \;\,+ \;\,  \left\langle
  \overline{\partial}^{\ast_{g_t, \Omega}}_{_{T_{X, J_t}}}
  \overline{\partial}_{_{T_{X, J_t}}} \nabla_t \,f_t\,, \nabla_t H_t
  \right\rangle_t \right] \Omega\\
  &  & \\
  & = &  - \;\,\int_X \left[ \left\langle \nabla^{\ast_{_{\Omega}}}_g
  \tmop{Ric}^{\ast}_{_{J_t}} (\Omega)\,, \nabla_t H_t \right\rangle_t
  \;\,+\;\, 
 \left\langle \nabla^{\ast_{_{\Omega}}}_g
  \overline{\partial}_{_{T_{X, J_t}}} \nabla_t\, f_t\,, \nabla_t H_t
  \right\rangle_t \right] \Omega\\
  &  & \\
  & = & - \;\,\int_X \left\langle \nabla^{\ast_{_{\Omega}}}_g \tmop{Ric}^{\ast}_g
  (\Omega)\,, \nabla_t H_t \right\rangle_t \Omega\\
  &  & \\
  & = & \int_X \Big[ | \nabla_t H_t |_t^2 \;\, -\;\, \big\langle
  \nabla_t \,f_t\,, \nabla_t H_t \big\rangle_t \Big] \Omega\;,
\end{eqnarray*}
thanks to basic K\"ahler identities and thanks to formulas (\ref{dec-end-Ric})
and (\ref{AdRic-H}). The required formula follows from the trivial identities.
\begin{eqnarray*}
  \int_X \big\langle \nabla_t \,f_t\,, \nabla_t H_t \big\rangle_t \Omega \;\; = \;\;
  \int_X \Delta_t \,H_t\, \Omega \;\;=\;\; \int_X \left\langle \mathbbm{I}_{_{T_X}},
  \partial^{g_t}_{_{T_{X, J_t}}} \nabla_t H_t \right\rangle_t \Omega \;.
\end{eqnarray*}

\section{Appendix}

\subsection{The first variation of Perelman's $\mathcal{W}$ functional along
the K\"ahler-Ricci flow}\label{Per-first-varKRF}

Analogues of the following evolution formulas were obtained by Perelman
\cite{Per}
in the Ricci flow case. 
For notation convenience we denote by $(g_t)_{t
\geqslant 0}$ the K\"ahler-Ricci flow and with $\omega_t$ the corresponding
symplectic forms.

\begin{theorem}
{\tmstrong{$(\tmop{Perelman})$}} 
  Let $X$ be a Fano manifold and let $f$ be a solution of the conjugate heat
  equation
  \begin{equation}
    \label{BackHet-f} 2 \;\dot{f} \;\;= \;\;-\;\,  \Delta \,f \;\; +\;\;
     | \nabla f|^2 \;\;+\;\; 2\, n \;\;-\;\; \tmop{Scal}\;,
  \end{equation}
  along the K\"ahler-Ricci flow $(g_t)_{t \geqslant 0}$, over a time interval
  $\left[ 0, T \right]$. Then the function
  \begin{eqnarray*}
    2 \;H \;\;\assign\;\; 2 \;\Delta f \; - \; | \nabla f|^2 \;\;+\;\;
    \tmop{Scal} \;\;+\;\; 2 \;f \;\;-\;\; 2\; n\;,
  \end{eqnarray*}
  satisfies the evolution equation
  \begin{equation}
    \label{ev-H} 2 \;\dot{H} \;\;=\;\; -\;\; \Delta\, H + 2 \;\nabla H \cdot \nabla f
\;\; + \;\; 
| \tmop{Ric} \;\;+\;\;
    i \partial \overline{\partial} f \;\; -\;\;
    \omega_t |^2 \;\;+ \;\; | \nabla^{1, 0} \partial
    f|^2 \;,
  \end{equation}
  over the time interval $\left[ 0, T \right]$. Moreover on this interval hold
  the variation formula
\begin{equation}
    \label{W-frst}  \frac{d}{dt} \hspace{0.25em} \mathcal{W} (g_t, f_t)
    = \int_X \Big[ | \tmop{Ric} \;\;+ \;\; i \partial
    \overline{\partial} f \;\; -\;\; \omega_t |^2 \;\; +
    \;\; | \nabla^{1, 0} \partial f|^2 \Big] e^{- f} d
    V_{g_t}  \hspace{0.25em} .
\end{equation}
\end{theorem}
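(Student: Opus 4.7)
The plan is to establish the evolution equation (\ref{ev-H}) first, then extract the variation formula (\ref{W-frst}) by a one-line duality step against the backward heat kernel.

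To derive (\ref{ev-H}), I time-differentiate each of the four summands of $2H = 2\Delta_g f - |\nabla f|^2 + \tmop{Scal}_g + 2f - 2n$ along the Kähler-Ricci flow $\dot g = -\tmop{Ric}(g) + g$. The scalar curvature gives a standard Hamilton-type contribution adjusted for the $+g$ piece of the flow, producing a $\Delta\tmop{Scal}$, a $|\tmop{Ric}|^2$, and a linear $\tmop{Scal}$ term. The Laplacian term $\Delta_g f$ requires the evolution formula of $\Delta_g$ as an operator, yielding a $\langle\tmop{Ric}, i\partial\bar\partial f\rangle$ cross term together with $\Delta \dot f$. The kinetic term $|\nabla f|^2$ picks up a $\tmop{Ric}(\nabla f, \nabla f)$ contribution from the metric and $2\nabla f\cdot\nabla\dot f$ from the potential. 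In each case I substitute $2\dot f$ from the backward heat equation (\ref{BackHet-f}) and collect. The crux is to recognize that the resulting pointwise expression, after applying the Bochner identity $\Delta|\nabla f|^2 = 2|\nabla^2 f|^2 + 2\tmop{Ric}(\nabla f,\nabla f) + 2\nabla f\cdot\nabla\Delta f$ and exploiting the Kähler decomposition of the real Hessian $\nabla^2 f$ into its $(1,1)$-part (corresponding to $i\partial\bar\partial f$ through $J_0$) and its $(2,0)+(0,2)$-part (corresponding to $A=\bar\partial\nabla f$), regroups as a perfect square $|\tmop{Ric}+i\partial\bar\partial f - \omega_t|^2$ plus the anti-holomorphic Hessian squared $|\nabla^{1,0}\partial f|^2$.

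Once (\ref{ev-H}) is in hand, (\ref{W-frst}) follows cleanly. Writing $\mathcal{W} = \int_X 2H\,e^{-f}dV_{g_t}$, the adjoint heat identity
\begin{eqnarray*}
2\,\frac{d}{dt}\int_X a\,b\,dV_{g_t} & = & -\int_X\bigl(\Box a\cdot b \;-\; a\,\Box^{\ast}b\bigr)dV_{g_t}\;,
\end{eqnarray*}
applied with $a = 2H$ and $b = e^{-f}$, together with $\Box^{\ast}e^{-f} = 0$ (which is precisely equivalent to (\ref{BackHet-f}) via the standard identity $\Delta_g e^{-f} = (|\nabla f|^2 - \Delta_g f)e^{-f}$), gives $\dot{\mathcal{W}} = -\int_X \Box H\cdot e^{-f}dV_{g_t}$. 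Substituting from (\ref{ev-H}) produces $\Box H = 2\Delta H - 2\nabla H\cdot\nabla f - |\tmop{Ric}+i\partial\bar\partial f - \omega_t|^2 - |\nabla^{1,0}\partial f|^2$. The weighted integration by parts $\int_X\Delta H\cdot e^{-f}dV_{g_t} = \int_X\nabla H\cdot\nabla f\cdot e^{-f}dV_{g_t}$ then cancels the two non-square contributions exactly, leaving (\ref{W-frst}).

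The principal obstacle is the bookkeeping inside (\ref{ev-H}). The two positive quadratic pieces $|\tmop{Ric}+i\partial\bar\partial f - \omega_t|^2$ and $|\nabla^{1,0}\partial f|^2$ are not individually visible at any intermediate step; they emerge only after the Bochner identity for $|\nabla f|^2$ is combined with the $\langle\tmop{Ric}, i\partial\bar\partial f\rangle$ term from $\dot\Delta f$, the $|\tmop{Ric}|^2$ term from $\dot{\tmop{Scal}}$, and the scaling corrections coming from the $+g$ piece of the flow. The role of the $-\omega_t$ inside the first square is to absorb the $-\tmop{Scal} - 2\Delta f + 2n$ pieces produced by the $+g$ piece of the flow and by the backward heat equation, through the identity $|\tmop{Ric}+i\partial\bar\partial f-\omega_t|^2 = |\tmop{Ric}+i\partial\bar\partial f|^2 - 2(\tmop{Scal}+\Delta f) + |\omega_t|^2$. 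Once the Kähler algebra is correctly organized the passage to (\ref{W-frst}) is a routine two-line computation.
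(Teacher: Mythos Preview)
Your proposal is correct and follows essentially the same route as the paper: compute the evolution of each constituent of $H$ (scalar curvature, Laplacian, gradient-square, potential), invoke the Bochner formula and the K\"ahler decomposition of the real Hessian into its $(1,1)$ and $(2,0)+(0,2)$ parts, and regroup into the perfect square plus anti-holomorphic Hessian term; then deduce (\ref{W-frst}) from $\dot{\mathcal{W}} = -\int_X \Box H\, e^{-f} dV_g$ and weighted integration by parts. The only organizational difference is that the paper packages the computation through the heat operator $\Box$ and the identity $2H = \Box f + 2f$, computing $\Box \Delta f$, $\Box |\nabla f|^2$, $\Box\,\tmop{Scal}$ directly rather than time-differentiating first and substituting (\ref{BackHet-f}), but the underlying ingredients and the final assembly are the same.
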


\begin{proof}
  We remind first that for any function 
$f \in C^{\infty} (X \times \R_{\geqslant 0} \hspace{0.25em}, \R)$ 
hold the evolution formulas along the K\"ahler-Ricci
  flow
  \begin{eqnarray}
    \frac{\partial}{\partial t} \; | \nabla f|^2 & = & -
    \;\;| \nabla f|^2 \hspace{0.75em} + \hspace{0.75em} \tmop{Ric}
    (\nabla f, J \nabla f) \hspace{0.75em} + \hspace{0.75em} 2 \;\nabla \dot{f}
    \cdot \nabla f \hspace{0.25em},  \label{time-grad}\\
    &  &  \nonumber\\
    \Delta\, | \nabla f|^2 & = & 2 \;\nabla \Delta f \cdot \nabla f
    \hspace{0.75em} + \hspace{0.75em} 2\; | \nabla^{1, 0} \partial f|^2
    \hspace{0.75em} + \hspace{0.75em} 2 \;| \partial \overline{\partial} f|^2
    \nonumber\\
    &  &  \nonumber\\
    & + & 2 \tmop{Ric} (\nabla f, J \nabla f) \hspace{0.25em}, 
    \label{LapGradEvol}\\
    &  &  \nonumber\\
    \frac{\partial}{\partial t} \hspace{0.25em} \Delta f & = & -
    \;\;\Delta \,f \;\; +\;\; \left\langle \tmop{Ric}, i
    \partial \overline{\partial} f \right\rangle \;\;+\;\; \Delta\,
    \dot{f} \hspace{0.25em} .  \label{time-Lap}
  \end{eqnarray}
  We remind also that the scalar curvature evolves by the formula
  \begin{equation}
    \label{ev-scal} 2 \hspace{0.25em}  \frac{\partial}{\partial t}
    \hspace{0.25em} \tmop{Scal} \hspace{0.75em} = \hspace{0.75em} \Delta
    \tmop{Scal} \hspace{0.75em}+\hspace{0.75em} 2\;| \tmop{Ric} |^2 \hspace{0.75em} -
    \hspace{0.75em} 2 \;\tmop{Scal} \hspace{0.25em} .
  \end{equation}
Furthermore we observe the identity
  \begin{equation}
    \label{Harnk=Heat} 2 H \hspace{0.75em} = \hspace{0.75em} \Box
    \hspace{0.25em} f \hspace{0.75em} + \hspace{0.75em} 2 \;f \hspace{0.25em} .
  \end{equation}
  By using the evolution equation (\ref{time-Lap}) we get the equality
  \begin{eqnarray*}
    \Box \hspace{0.25em} \Delta \,f & = & \Delta^2 f \hspace{0.75em} +
    \hspace{0.75em} 2 \;\Delta \,f \hspace{0.75em} - \hspace{0.75em} 2\;
    \left\langle \tmop{Ric}, i \partial \overline{\partial} f \right\rangle
    \hspace{0.75em} - \hspace{0.75em} 2 \;\Delta \,\dot{f}\\
    &  & \\
    & = & \Delta \hspace{0.25em} \Box \hspace{0.25em} f \hspace{0.75em} +
    \hspace{0.75em} 2 \;\Delta \,f \hspace{0.75em} - \hspace{0.75em} 2
    \left\langle \tmop{Ric}, i \partial \overline{\partial} f \right\rangle\\
    &  & \\
    & = & 2 \;\Delta \,H \hspace{0.75em} - \hspace{0.75em} 2 \left\langle
    \tmop{Ric}, i \partial \overline{\partial} f \right\rangle
    \hspace{0.25em},
  \end{eqnarray*}
  thanks to the identity (\ref{Harnk=Heat}). Moreover if we combine the
  evolution equations (\ref{time-grad}) and (\ref{LapGradEvol}) we obtain
  \begin{eqnarray}
    \Box\,| \nabla f|^2 & = & 2 \;\nabla \hspace{0.25em} \Box \hspace{0.25em} f
    \cdot \nabla f \hspace{0.75em} + \hspace{0.75em} 2 \;| \nabla^{1, 0}
    \partial f|^2 \hspace{0.75em} + \hspace{0.75em} 2 \;| \partial
    \overline{\partial} f|^2 \hspace{0.75em} + \hspace{0.75em} 2 \;| \nabla f|^2
    \nonumber\\
    &  &  \nonumber\\
    & = & 4 \;\nabla H \cdot \nabla f \hspace{0.75em} + \hspace{0.75em} 2 \;|
    \nabla^{1, 0} \partial f|^2 \hspace{0.75em} + \hspace{0.75em} 2\;| \partial
    \overline{\partial} f|^2 \hspace{0.75em} - \hspace{0.75em} 2 \;| \nabla f|^2
    \hspace{0.25em}, \nonumber
  \end{eqnarray}
  thanks to the identity (\ref{Harnk=Heat}). We infer the expressions
  \begin{eqnarray*}
    2\hspace{0.25em} \Box\, H & = & 2 \hspace{0.25em} \Box \hspace{0.25em}
    \Delta \,f \hspace{0.75em} - \hspace{0.75em} \Box \hspace{0.25em} | \nabla
    f|^2 \hspace{0.75em} + \hspace{0.75em} \Box \hspace{0.25em} \tmop{Scal}
    \hspace{0.75em} + \hspace{0.75em} 2 \hspace{0.25em} \Box \hspace{0.25em}
    f\\
    &  & \\
    & = & 4 \;\Delta \,H \hspace{0.75em} - \hspace{0.75em} 4 \left\langle
    \tmop{Ric}, i \partial \overline{\partial} f \right\rangle\\
    &  & \\
    & - & 4 \;\nabla \,H \cdot \nabla f \hspace{0.75em} - \hspace{0.75em} 2 \;|
    \nabla^{1, 0} \partial f|^2 \hspace{0.75em} - \hspace{0.75em} 2\; | \partial
    \overline{\partial} f|^2 \hspace{0.75em} + \hspace{0.75em} 2 \;| \nabla
    f|^2\\
    &  & \\
    & - & 2 \;| \tmop{Ric} |^2 \hspace{0.75em} + \hspace{0.75em} 2
    \tmop{Scal} \hspace{0.75em} + \hspace{0.75em} 4 \;H \hspace{0.75em} -
    \hspace{0.75em} 4 \;f
\end{eqnarray*}
\begin{eqnarray*}
    & = & 4 \;\Delta \,H \hspace{0.75em} + \hspace{0.75em} 4 \;\Delta \,f
    \hspace{0.75em} + \hspace{0.75em} 4\; \tmop{Scal} \hspace{0.75em} -
    \hspace{0.75em} 4 \;\left\langle \tmop{Ric}, i \partial \overline{\partial}
    f \right\rangle \hspace{0.75em} - \hspace{0.75em} 4 \;\nabla H \cdot \nabla
    f\\
    &  & \\
    & - & 2 \;| \nabla^{1, 0} \partial f|^2 \hspace{0.75em} - \hspace{0.75em} 2\;
    | \partial \overline{\partial} f|^2 \hspace{0.75em} - \hspace{0.75em} 2 \;|
    \tmop{Ric} |^2 \hspace{0.75em} - \hspace{0.75em} 4 \;n\\
    &  & \\
    & = & 4 \;\Delta \,H \hspace{0.75em} - \hspace{0.75em} 4 \;\nabla H \cdot
    \nabla f 
\\
\\
&-&
2 \,(B \hspace{0.75em} -
    \hspace{0.75em} 2 \;\Delta \,f \hspace{0.75em} + \hspace{0.75em} 2\, n
    \hspace{0.75em} - \hspace{0.75em} 2 \;\tmop{Scal}) \hspace{0.25em} .
  \end{eqnarray*}
  where
  \[ B \hspace{0.75em} \;\;: =\;\; | \nabla^{1, 0} \partial f|^2 \;\;+\;\; | \partial
     \overline{\partial} f|^2 \hspace{0.75em} + \hspace{0.75em} 2 \left\langle
     \tmop{Ric}, i \partial \overline{\partial} f \right\rangle
     \hspace{0.75em} + \hspace{0.75em}| \tmop{Ric} |^2 \hspace{0.25em} . \]
  Arranging the terms by means of the trivial identity $\tmop{Tr}_{\omega_t}
  \alpha = \left\langle \omega_t, \alpha \right\rangle$, with $\alpha$ a real
  $(1, 1)$-form, we obtain the evolution equation
\begin{eqnarray*}
    \label{Het-H} 2 \hspace{0.25em} \Box \hspace{0.25em} H 
& =& 4 \hspace{0.25em} \Delta \,H \hspace{0.75em} -
    \hspace{0.75em} 4 \hspace{0.25em} \nabla H \cdot \nabla f 
\\
\\
&-&
2 \hspace{0.25em} | \tmop{Ric} \hspace{0.75em} +
    \hspace{0.75em} i \partial \overline{\partial} f \hspace{0.75em} -\hspace{0.75em}
    \omega_t |^2 \hspace{0.75em} - \hspace{0.75em} 2 \hspace{0.25em} |
    \nabla^{1, 0} \partial f|^2 \hspace{0.25em},
\end{eqnarray*}
  which implies the evolution formula (\ref{ev-H}). We remind now that the
  evolution equation (\ref{BackHet-f}) rewrites as $\Box^{\ast} e^{- f} = 0$.
  Thus time deriving the identity
  \[ \mathcal{W} (g_t, f_t) \hspace{0.75em} = \int_X 2\, H \hspace{0.25em} e^{-
     f} d V_g  \hspace{0.25em}, \]
  we infer
  \begin{eqnarray*}
    \frac{d}{dt} \hspace{0.25em}  \mathcal{W} (g_t, f_t) & = & -
    \hspace{0.75em} \int_X \Box \hspace{0.25em} H \hspace{0.25em} e^{- f} d
    V_{g}  \hspace{0.25em},
  \end{eqnarray*}
  which implies Perelman's variation formula (\ref{W-frst}).
\end{proof}

\subsection{Local expression of the complex anti-linear part of the Hessian }\label{loc-expr-antH}

Let $(X, J, \omega)$ be a K\"ahler manifold and $u \in C^2 (X, \mathbbm{R})$.
Let $(z_1, \ldots, z_n)$ be $J$-holomorphic coordinates and consider the local
expression
\[ \overline{\partial}_{_{T_{X, J}}} \nabla_g \,u \;\; =\;\;
    A_{k, \bar{l}} \;\, \bar{\zeta}_l^{\ast} \otimes
   \zeta_k \hspace{0.75em} + \hspace{0.75em} \overline{A_{k, \bar{l}}}
   \;\, \zeta_l^{\ast} \otimes \bar{\zeta}_k  \hspace{0.25em}, \]
where $\zeta_k : = \frac{\partial}{\partial z_k}$. 
We want to find the
expression of the coefficients $A_{k, \bar{l}}$ with respect to $u$. For this
purpose we consider the identities 
$$
\nabla_g \,u \;\;=\;\; \nabla^{1, 0}_{g, J} \,u \;\;+\;\;
\nabla^{0, 1}_{g, J} \,u
$$ 
and
\[ \nabla^{1, 0}_{g, J} \,u \;\neg\; \omega \hspace{0.75em} = \hspace{0.75em} i\,
   \overline{\partial}_{_J} u \hspace{0.25em} . \]
If we write locally $\nabla^{1, 0}_{g, J} \,u \;=\; \xi_k \;\zeta_k$ then the last
identity writes locally as
\[ \frac{i}{2} \hspace{0.25em} \omega_{l, \bar{k}}  \hspace{0.25em} \xi_l
   \; \bar{\zeta}_k^{\ast} \hspace{0.75em} = \hspace{0.75em} i
   \hspace{0.25em} ( \bar{\zeta}_k \hspace{0.25em} . \hspace{0.25em} u)
   \; \bar{\zeta}_k^{\ast} \hspace{0.25em} . \]
We infer the expression $\xi_l = 2 \,\omega^{k, \bar{l}} \hspace{0.25em}
\bar{\zeta}_k \hspace{0.25em} . \hspace{0.25em} u$. Moreover by the definition
of the operator $\overline{\partial}_{_{T_{X, J}}}$ hold the identities
\[  A_{k, \bar{l}} \; \zeta_k \hspace{0.75em} =
   \hspace{0.75em}  \left[ \left( \overline{\partial}_{_{T_{X, J}}} \nabla_g \,u
   \right)  \bar{\zeta}_l \right]^{1, 0}_{_{J_t}} \hspace{0.75em} =
   \hspace{0.75em}  \left[ \bar{\zeta}_l \hspace{0.25em}, \hspace{0.25em}
   \nabla^{1, 0}_{g, J} \,u \right]^{1, 0}_{_{J_t}} \hspace{0.75em} =
   \hspace{0.75em} ( \bar{\zeta}_l \; .\; \xi_k) \hspace{0.25em}
   \zeta_k \hspace{0.25em} . \]
We infer the expressions
\begin{eqnarray*}
  A_{k, \bar{l}} & = & \bar{\zeta}_l \hspace{0.25em} . \hspace{0.25em} \xi_k
  \hspace{0.75em} = \hspace{0.75em} 2 \bar{\zeta}_l \hspace{0.25em} . \left(
  \omega^{r, \bar{k}}  \hspace{0.25em} \bar{\zeta}_r \hspace{0.25em} .
  \hspace{0.25em} f \right)\\
  &  & \\
  & = & 2 \omega^{p, \bar{k}}  \left[ \bar{\zeta}_l \hspace{0.25em} .
  \hspace{0.25em} \bar{\zeta}_p \hspace{0.25em} . \hspace{0.25em} f_t
  \hspace{0.75em} - \hspace{0.75em} \left( \bar{\zeta}_l \hspace{0.25em} .
  \hspace{0.25em} \omega_{j, \bar{p}} \right) \hspace{0.25em} \omega^{r,
  \bar{j}} \hspace{0.25em} \bar{\zeta}_r \hspace{0.25em} . \hspace{0.25em} f_t
  \right] \hspace{0.25em} .
\end{eqnarray*}
\subsection{Comparison of norms on $T_{_{X, J}}$-valued forms}\label{comp-CX-norms}

Let $(X, J, g)$ be a hermitian manifold. Let $\omega : = g J$ and let
$h^{\ast}$ be the corresponding hermitian metric over the complex vector
bundle $T^{\ast}_{_{X, J}}$. With respect to a local complex frame
$(\zeta_k)_k \subset T_{_{X, J}}^{1, 0}$ we have the expression
\[ h^{\ast} \;\;=\;\; 4 \, \sum_{k, l} \hspace{0.25em} \omega^{l \bar{k}}
   \hspace{0.25em} \zeta_k \otimes \bar{\zeta}_l \;. \]
We remind that if $(V, J)$ is a complex vector space equipped with a hermitian
metric $h$ then the corresponding hermitian metric $h_{_{\C}}$ over the
complexified vector space $(V \otimes_{_{\R}} \C, i)$ is defined by the
formula
\[ 2 \,h_{_{\C}} (v, w) \;\;: =\;\; h (v, \overline{w}) \;\,+\;\, \overline{h ( \overline{v},
   w)}, \hspace{1em} v, w \in V \otimes_{_{\R}} \C\;, \]
where we still note by $h$ the $\C$-linear extension of $h$. Thus $h_{_{\C}}$
coincides with the sesquilinear extension over $V \otimes_{_{\R}} \C$ of the
Riemannian metric associated to $h$. We infer by the expression (31) in \cite{Pal} of the
Riemannian metric on the exterior products that the induced hermitian product on the
vector bundle $\Lambda^{p, q}_J T^{\ast}_X$ is given by the formula
\begin{eqnarray*}
  &  & \Big\langle \Lambda_{j = 1}^p \alpha_{1, j} \wedge \Lambda_{j = 1}^q
  \beta_{1, j} \hspace{0.25em}, \hspace{0.25em} \Lambda_{j = 1}^p \alpha_{2,
  j} \wedge \Lambda_{j = 1}^q \beta_{2, j} \Big\rangle\\
  &  & \\
  & = & (p + q) ! \det \left( 2^{- 1} h^{\ast} (\alpha_{1, j},
  \bar{\alpha}_{2, l}) \right) \hspace{0.25em}  \overline{\det \left( 2^{- 1}
  h^{\ast} ( \bar{\beta}_{1, j}, \beta_{2, l}) \right)} \;.
\end{eqnarray*}
Consider now an element 
$$
A \;\in\; T^{\ast}_{_{X, - J}} \otimes_{_{\mathbbm{C}}} T_{_{X, J}}
\;\cong\; \Lambda^{0, 1}_J T^{\ast}_X \otimes_{_{\mathbbm{C}}} T_{_{X, J}}\;,
$$ 
and
let $(e_k)_k \subset T_{_{X, J}}, e_k \assign \zeta_k \,+\, \bar{\zeta}_k$ be the
$J$-complex basis associated to $(\zeta_k)_k$. Then hold the local expression
\begin{eqnarray*}
  A \;\; = \;\; A_{k, \bar{l}}\;  \bar{\zeta}^{\ast}_l \otimes_{_J} e_k \;\;=\;\; A_{k,
  \bar{l}}  \;\bar{\zeta}^{\ast}_l \otimes \zeta_k \;\,+\;\, \tmop{Conjugate} \;.
\end{eqnarray*}
Assume from now on that the frame $(e_k)_k$ is $h$-orthonormal. On one side if
one think of $A$ as an element in $\tmop{End}_{_{\mathbbm{R}}} \left( T_{_X}
\right)$ then
\begin{eqnarray*}
  |A|^2_g \;\; = \;\; \tmop{Tr}_{_{\mathbbm{R}}} \left( A\, A_g^T \right) \;\;=\;\; 2\, |A_{k,
  \bar{l}} |^2\;,
\end{eqnarray*}
since
\begin{eqnarray*}
  A_g^T \;\; = \;\; A_{l, \bar{k}} \; \bar{\zeta}^{\ast}_l \otimes \zeta_k \;\,+\;\,
  \tmop{Conjugate} \;.
\end{eqnarray*}
On the other side
\begin{eqnarray*}
  |A|^2_{\omega} \;\;\equiv\;\; |A|_{\Lambda_J^{0, 1} T^{\ast}_X
  \otimes_{_{\mathbbm{C}}} T_{X, J}, \omega}^2 
& = & \left\langle A_{k,
  \bar{l}}  \;\bar{\zeta}^{\ast}_l\,, A_{k, \bar{p}}  \;\bar{\zeta}^{\ast}_p
  \right\rangle \\
  &  & \\
  & = & \frac{1}{2}  \,\overline{h^{\ast} \left( \overline{A}_{k, \bar{l}}\;
  \zeta^{\ast}_l\,, A_{k, \bar{p}}  \;\bar{\zeta}^{\ast}_p \right)}\\
  &  & \\
  & = & \frac{1}{2} \,|A_{k, \bar{l}} |^2 \cdot 4\;,
\end{eqnarray*}
Thus $|A|^2_g = |A|^2_{\omega} .$ The same identity hold true for any 
$$
A \;\in\;
T^{\ast}_{_{X, J}} \otimes_{_{\mathbbm{C}}} T_{_{X, J}} \;\cong\; \Lambda^{1, 0}_J
T^{\ast}_X \otimes_{_{\mathbbm{C}}} T_{_{X, J}}\;.
$$ 
In higher degrees there is a multiplicative factor involved. We consider for example $A \in
\Lambda^{1, 1}_J T^{\ast}_X \otimes_{_{\mathbbm{C}}} T_{_{X, J}}$ and its
local expression
\begin{eqnarray*}
  A \;\; = \;\; i\, A_{p, k, \bar{l}}\,  \left( \zeta_p^{\ast} \wedge
  \bar{\zeta}_l^{\ast} \right) \otimes \zeta_k \;\,+\;\, \tmop{Conjugate}\; .
\end{eqnarray*}
Then
\begin{eqnarray*}
  |A|_{\Lambda_J^{1, 1} T^{\ast}_X \otimes_{_{\mathbbm{C}}} T_{X, J},
  \omega}^2 & = & \left\langle i \,A_{p, k, \bar{l}} \;\zeta_p^{\ast} \wedge
  \bar{\zeta}_l^{\ast}\,, i\, A_{r, k, \bar{h}} \;\zeta_r^{\ast} \wedge
  \bar{\zeta}_h^{\ast} \right\rangle\\
  &  & \\
  & = & A_{p, k, \bar{l}}  \;\overline{A}_{r, k, \bar{h}}  \left\langle
  \zeta_p^{\ast} \wedge \bar{\zeta}_l^{\ast}\,, \zeta_r^{\ast} \wedge
  \bar{\zeta}_h^{\ast} \right\rangle\\
  &  & \\
  & = & \frac{1}{2} \,|A_{p, k, \bar{l}} |^2 \,h^{\ast} \left( \zeta_p^{\ast}\,,
  \bar{\zeta}_p^{\ast} \right)  \,\overline{h^{\ast} \left( \zeta_l^{\ast}\,,
  \bar{\zeta}_l^{\ast} \right)}\\
  &  & \\
  & = & 8\, |A_{p, k, \bar{l}} |^2 \;.
\end{eqnarray*}
On the other side if we think of $A$ as an element of $\Lambda^2 T_X
\otimes_{_{\mathbbm{R}}} T_X$ then \
\begin{eqnarray*}
  |A|_{\Lambda^2 T_X \otimes_{_{\mathbbm{R}}} T_X, g}^2 
& = &
  \tmop{Tr}_{_{\mathbbm{R}}} \left[ \left( e_r \;\neg\; A \right) \left( e_r \;\neg\;
  A \right)_g^T \right] 
\\
\\
&+& 
\tmop{Tr}_{_{\mathbbm{R}}} \left[ \left( J e_r \;\neg\;
  A \right) \left( J e_r \;\neg\; A \right)_g^T \right]\;,
\end{eqnarray*}
and
\begin{eqnarray*}
  e_r \;\neg\; A & = & i\, A_{r, k, \bar{l}}  \;\bar{\zeta}_l^{\ast} \otimes \zeta_k 
\;\,-\;\,
  i \,A_{l, k, \bar{r}} \;\zeta_l^{\ast} \otimes \zeta_k \;\,+\;\, \tmop{Conjugate}\;,\\
  &  & \\
  J e_r \;\neg\; A & = & - \;\,A_{r, k, \bar{l}}  \;\bar{\zeta}_l^{\ast} \otimes \zeta_k
  \;\,-\;\, A_{l, k, \bar{r}} \;\zeta_l^{\ast} \otimes \zeta_k \;\,+\;\, \tmop{Conjugate}\;,\\
  &  & \\
  \left( e_r \;\neg\; A \right)_g^T & = & i\, A_{r, l, \bar{k}} \;
  \bar{\zeta}_l^{\ast} \otimes \zeta_k \;\,+\;\, i\, \overline{A}_{k, l, \bar{r}}\;
  \zeta_l^{\ast} \otimes \zeta_k \;\,+\;\, \tmop{Conjugate}\;,\\
  &  & \\
  \left( J e_r \;\neg\; A \right)_g^T & = & - \;\,A_{r, l, \bar{k}} \;
  \bar{\zeta}_l^{\ast} \otimes \zeta_k \;\,-\;\, \overline{A}_{k, l, \bar{r}}\;
  \zeta_l^{\ast} \otimes \zeta_k \;\,+\;\, \tmop{Conjugate}\; .
\end{eqnarray*}
(By conjugate we mean the complex conjugate of all therms preceding this
  word.) Thus hold the equality
\begin{eqnarray*}
  \tmop{Tr}_{_{\mathbbm{R}}} \left[ \left( e_r \;\neg\; A \right) \left( e_r \neg
  A \right)_g^T \right] \;\; = \;\; 2\, |A_{l, k, \bar{r}} |^2 \;\;=\;\;
  \tmop{Tr}_{_{\mathbbm{R}}} \left[ \left( J e_r \;\neg\; A \right) \left( J e_r
  \;\neg\; A \right)_g^T \right]\; .
\end{eqnarray*}
We infer the identity
\begin{eqnarray*}
  2\,|A|_{\Lambda^2 T_X \otimes_{_{\mathbbm{R}}} T_X, g}^2 \;\; = \;\;
  |A|_{\Lambda_J^{1, 1} T^{\ast}_X \otimes_{_{\mathbbm{C}}} T_{X, J},
  \omega}^2 \;.
\end{eqnarray*}

\vspace{1cm}
\noindent
Nefton Pali
\\
Universit\'{e} Paris Sud, D\'epartement de Math\'ematiques 
\\
B\^{a}timent 425 F91405 Orsay, France
\\
E-mail: \textit{nefton.pali@math.u-psud.fr}
\end{document}